\newtheorem{theorem}{Theorem}[section]
\newtheorem{corollary}[theorem]{Corollary}
\newtheorem{lemma}[theorem]{Lemma}
\theoremstyle{definition}
\newcommand{\TT}{{\mathrm{t}}}
\theoremstyle{remark}
\newtheorem{problem}[theorem]{Problem}
\numberwithin{equation}{section}
\def\Span{{\rm span}\,}
\def\bV{{\bf V}}
\def\Span{{\rm span}\,}
\def\diag{{\rm diag}\,}
\def\dim{{\rm dim}\,}
\def\IC{{\mathbb C}}
\def\IF{{\mathbb F}}
\def\IR{{\mathbb R}}
\def\bM{{\mathbf M}}
\newcommand{\be}{\mathbf{e}}
\newcommand{\bt}{\mathbf{t}}
\newcommand{\bu}{\mathbf{u}}
\newcommand{\bv}{\mathbf{v}}
\newcommand{\bx}{\mathbf{x}}
\newcommand{\by}{\mathbf{y}}
\newcommand{\bz}{\mathbf{z}}
\newcommand{\fU}{\mathfrak{U}}
\newcommand{\Pk}{\operatorname{Pk}}
\begin{document}
\openup .9\jot
\title{Linear maps preserving $\ell_p$-norm parallel vectors}

\dedicatory{In the memory of  Professor Peter Michael Rosenthal (June 1, 1941–-May 25, 2024) }

\author[Li, Tsai, Wang  and Wong]
{Chi-Kwong Li, Ming-Cheng Tsai, Ya-Shu Wang  \and Ngai-Ching Wong}

\address[Li]{Department of Mathematics, The College of William
\& Mary, Williamsburg, VA 13187, USA.}
\email{ckli@math.wm.edu}

\address[Tsai]{General Education Center, National Taipei University of  Technology, Taipei 10608, Taiwan.}
\email{mctsai2@mail.ntut.edu.tw}

\address[Wang]{Department of Applied Mathematics, National Chung Hsing University, Taichung 40227, Taiwan.}
\email{yashu@nchu.edu.tw}

\address[Wong]{Department of Applied Mathematics, National Sun Yat-sen University, Kaohsiung, 80424, Taiwan; Department of Healthcare
Administration and Medical Information,
Kaohsiung  Medical University, 80708 Kaohsiung, Taiwan.}
  \email{wong@math.nsysu.edu.tw}
\date{}

\begin{abstract}
Two vectors $\bx, \by$ in a normed vector space  are parallel   if
there is a scalar $\mu$ with $|\mu| = 1$ such that
$\|\bx+\mu \by\| = \|\bx\| + \|\by\|$; they form a triangle equality attaining (TEA) pair  if
$\|\bx+\by\| = \|\bx\| + \|\by\|$. In this paper, we characterize linear maps on $\IF^n=\IR^n$ or $\IC^n$, equipped with the $\ell_p$-norm
 for $p \in [1, \infty]$, preserving parallel pairs or preserving TEA pairs.
Indeed,  any linear map will preserve parallel pairs
and TEA pairs when $1< p <\infty$.
For the $\ell_1$-norm, TEA  preservers form a semigroup of matrices in which each row has at most one
nonzero entries; adding rank one matrices to this semigroup will be the semigroup of parallel preserves.
For the $\ell_\infty$-norm,  a nonzero TEA preserver, or a parallel preserver of rank greater than one, is always a
multiple of an $\ell_\infty$-norm isometry,
except when $\IF^n = \IR^2$.  We also have a  characterization for the exceptional case.
The results are extended to    linear maps of
the  infinite dimensional spaces $\ell_1(\Lambda)$, $c_0(\Lambda)$ and $\ell_\infty(\Lambda)$.
\end{abstract}

\subjclass{15A86, 15A60.}

\keywords{norm parallelism, parallel pair preservers, triangle equality attaining   pair preservers}
\maketitle


\section{Introduction}\label{s:Intro}

In a normed vector space $(\bV,\|\cdot\|)$ over the real field $\IF=\IR$ or the complex field $\IF=\IC$,
two vectors $\bx, \by \in \bV$ form a  \emph{parallel} pair if
\begin{equation}\label{parallel}
\|\bx+\mu \by\| = \|\bx\| + \|\by\| \quad \hbox{ for some } \mu \in \IF \hbox{ with } |\mu| = 1;
\end{equation}
the vectors $\bx,\by \in \bV$ form a \emph{triangle equality attaining} (\emph{TEA}) pairs if
\begin{equation}\label{TEA}
\|\bx+\by\| = \|\bx\| + \|\by\|.
\end{equation}
We are interested in those linear maps $T: (\IF^n, \|\cdot\|) \rightarrow (\IF^n,\|\cdot\|)$  preserving
parallel pairs, namely,
\begin{equation}\label{g-pp-pre}
(T\bx,T\by) \hbox{ is a parallel pair}\quad  \hbox{whenever}\quad  (\bx,\by)  \hbox{ is a parallel pair,}
\end{equation}
or preserving TEA pairs, namely,
\begin{equation}\label{g-te-pre}
(T\bx,T\by) \hbox{ is a TEA pair}\quad  \hbox{whenever}\quad (\bx,\by) \hbox{ is a TEA pair.}
\end{equation}

Recall that a norm $\|\cdot\|$ on $\bV$ is {strictly convex} if for any two nonzero  vectors
$\bx, \by \in \bV$ satisfying
$\|\bx+\by\| = \|\bx\| + \|\by\|$ we have $\bx = t\by$ for some $t > 0$.
It is easy to see that if the norm $\|\cdot\|$ is strictly convex, then any linear map $T$
will satisfy (\ref{g-pp-pre}) and (\ref{g-te-pre}). For example, the $\ell_p$-norms on $\IF^n$
defined by
$\|\bx\|_p = (\sum_{j=1}^n |x_j|^p)^{1/p}$
when $p \ge 1$, and
$\|\bx\|_\infty = \max\{ |x_j|: 1 \le j \le n\}$, for $\bx = (x_1, \dots, x_n)^{\TT}$,
are strict convex except for $p = 1, \infty$.

In this paper, we characterize linear maps on $\IF^n$
 preserving parallel pairs or preserving TEA pairs with respect to the
$\ell_1$-norm and $\ell_\infty$-norm, respectively.
In the following,  let $\{\be_1,\ldots, \be_n\}$ denote the standard basis for $\IF^n$, and $\bM_n$ be the algebra of
$n\times n$ matrices with entries from $\IF$.  We  identify  linear maps from  $\IF^n$ to  $\IF^n$ with matrices in $\bM_n$.
For $\bu\in \IF^n$ and $A \in \bM_n$, we let $\bu^\TT$ and $A^\TT$ denote their transposes.
Below are our findings.  The proofs are given in Section \ref{s:finite-dim}.

\begin{theorem}
\label{prop:1-preservers}\label{p-3.2}
Let $T: (\IF^n,  \|\cdot\|_1)\rightarrow (\IF^n,  \|\cdot\|_1)$ be a linear map.
\begin{itemize}
\item[{\rm (a)}] $T$ preserves TEA pairs if and only if each row of $T$  has at most one nonzero entry.
\item[{\rm (b)}] $T$ preserves parallel pairs if and only if each row of $T$ has at most one nonzero entry,
or $T = {\bv}{\bu}^\TT$
for some column vectors  ${\bu}, {\bv} \in \IF^n$.
\end{itemize}
\end{theorem}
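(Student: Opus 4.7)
Sufficiency is a direct verification: if each row of $T$ has at most one nonzero entry, say $t_{i,j_i}$ in row $i$, then $(T\bx)_i\,\overline{(T\by)_i} = |t_{i,j_i}|^2\, x_{j_i}\overline{y_{j_i}} \ge 0$ whenever $(\bx,\by)$ is TEA, so $(T\bx,T\by)$ is TEA. For necessity, I will use that for $j\neq k$ the pair $(\be_j,\alpha\be_k)$ is TEA for every $\alpha\in\IF$, because its coordinates have disjoint supports. Applying $T$ gives $(\bt_j)_i\,\overline{\alpha(\bt_k)_i}\ge 0$ for every $i$ and every $\alpha\in\IF$, which, as $\alpha$ sweeps all of $\IF$, forces $(\bt_j)_i\,\overline{(\bt_k)_i} = 0$ for all $i$; hence each row of $T$ has at most one nonzero entry.

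\textbf{Part (b), sufficiency.} Matrices with at most one nonzero entry per row preserve TEA pairs by (a), hence also parallel pairs. A rank-one matrix $\bv\bu^\TT$ sends every vector to a scalar multiple of $\bv$, and any two such vectors are parallel in any norm.

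\textbf{Part (b), necessity.} My plan is: assume $T$ preserves parallel pairs and some row $i_0$ has two nonzero entries $a = t_{i_0 j}$, $b = t_{i_0 k}$ with $j\neq k$, and then show that $T$ has rank at most one. The key observation is that for any distinct $p,q$ and any $\alpha\in\IF$, the pair $(\be_q,\be_p+\alpha\be_q)$ is parallel in $\ell_1$ (take $\mu = \overline\alpha/|\alpha|$, or any $\mu$ if $\alpha = 0$). Taking $(p,q) = (j,k)$ and applying $T$ will produce, for each $\alpha$, a unimodular $\mu_\alpha$ with
\[
(\bt_k)_i\,\overline{\mu_\alpha\bigl((\bt_j)_i+\alpha(\bt_k)_i\bigr)} \ge 0\qquad\text{for all }i.
\]
Evaluating at $i = i_0$ pins down $\arg(\mu_\alpha)$, and matching that determination at any other row $i$ with $(\bt_k)_i \neq 0$ will force
\[
\frac{(\bt_j)_i/(\bt_k)_i + \alpha}{a/b + \alpha} \in \IR_{>0}\qquad \text{for every }\alpha\in\IF.
\]
Since a nonconstant linear-fractional function of $\alpha$ cannot remain a positive real throughout $\IF$, this will force $(\bt_j)_i = (a/b)(\bt_k)_i$. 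A symmetric argument using the parallel pair $(\be_j,\be_k+\alpha\be_j)$ will show $\bt_j$ and $\bt_k$ share the same support, giving in total $\bt_j = c\bt_k$ with $c = a/b\neq 0$.

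To handle any other column $\bt_\ell$ with $\ell\neq j,k$, I will split into two cases. If some row $i'$ has $(\bt_j)_{i'}$ and $(\bt_\ell)_{i'}$ both nonzero, then rerunning the M\"obius argument (with $\ell$ in the role of $k$ and $i'$ as base row) gives $\bt_\ell \propto \bt_j \propto \bt_k$. Otherwise $\mathrm{supp}(\bt_\ell)$ is disjoint from $\mathrm{supp}(\bt_k)$, and I will claim $\bt_\ell = 0$: the two-parameter family $(\be_j+\alpha\be_\ell,\be_k+\beta\be_\ell)$ is parallel for all $\alpha,\beta$, so the image parallelism must be witnessed by a single $\mu$ satisfying $\arg(\mu) = \arg(c)$ (from rows in $\mathrm{supp}(\bt_k)$) and, if $\bt_\ell \neq 0$, $\arg(\mu) = \arg(\alpha\overline\beta)$ (from rows in $\mathrm{supp}(\bt_\ell)$); free variation of $(\alpha,\beta)$ then contradicts the fixed value of $\arg(c)$. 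In both cases every column of $T$ is a scalar multiple of $\bt_k$, so $T = \bv\bu^\TT$ with $\bv = \bt_k$, completing the proof. The main obstacle is the M\"obius step above: extracting the strong rigidity $\bt_j = c\bt_k$ from the seemingly weak parallelism of the one-parameter family of images $(\bt_k, \bt_j + \alpha\bt_k)$.
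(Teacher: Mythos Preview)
Your proof is correct and takes a genuinely different route from the paper's.

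For part (a), the paper argues necessity by normalizing (via monomial matrices) so that two entries in one row both equal $1$, and then exhibits the single concrete TEA pair $\bx=(2,-1,0,\dots)^\TT$, $\by=(1,-2,0,\dots)^\TT$ whose images fail TEA. Your argument instead exploits the whole family $(\be_j,\alpha\be_k)$, $\alpha\in\IF$, and observes that a nonzero complex (or real) number cannot have $\alpha$-multiples in $[0,\infty)$ for every $\alpha$. This is shorter and avoids any preliminary normalization.

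For part (b), the paper assumes $\operatorname{rank} T\ge 2$ and a row with several nonzero entries, normalizes so the offending row is $(1,\dots,1,0,\dots,0)$ with maximal support, picks a linearly independent second row, and then splits into two cases according to whether the second row is constant on the support of the first; in each case a handful of explicit test vectors produces a contradiction. Your approach is structurally different: from a single row with two nonzero entries $a=t_{i_0j}$, $b=t_{i_0k}$ you feed in the one-parameter parallel families $(\be_k,\be_j+\alpha\be_k)$ and $(\be_j,\be_k+\alpha\be_j)$, and the fact that the M\"obius map $\alpha\mapsto (p+\alpha)/(q+\alpha)$ cannot stay in $\IR_{>0}$ unless $p=q$ forces the rigid conclusion $\bt_j=(a/b)\bt_k$. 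You then propagate this to every other column $\bt_\ell$, either by reusing the M\"obius step (when supports overlap) or by a two-parameter family $(\be_j+\alpha\be_\ell,\be_k+\beta\be_\ell)$ that pins $\arg\mu$ to two incompatible values (when supports are disjoint). The upshot is that every column is a multiple of $\bt_k$, so $T$ has rank one.

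What each approach buys: the paper's test-vector method is hands-on and requires no analysis beyond checking a few inequalities, but leans on a somewhat delicate normalization (maximal row support) and a case split. Your M\"obius rigidity argument is more conceptual, needs no preliminary normalization, and makes transparent \emph{why} two nonzero entries in a row force the corresponding columns to be proportional; it also adapts cleanly to the infinite-dimensional $\ell_1(\Lambda)$ setting without the bookkeeping the paper uses there.
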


By the above result,
for the $\ell_1$-norm, TEA  preservers form a semigroup of matrices such that each row has at most one
nonzero entries; adding rank one matrices to this semigroup will be the semigroup of parallel preserves.

Recall that a matrix  in $\bM_n$ is a \emph{monomial} matrix
if each row and each column of it has exactly
one nonzero entry, and
  a monomial matrix  is a \emph{generalized permutation matrix} if
 all its nonzero  entries have
modulus one.
By   Theorem \ref{prop:1-preservers},
 an invertible linear map $T$ preserves TEA
or parallel pairs of $(\IF^n,  \|\cdot\|_1)$ if and only if it is a monomial
matrix.
For $(\IF^n,\|\cdot\|_\infty)$, we have the following.

\begin{theorem} \label{p-3.2old}
Let  $T: (\IF^n, \|\cdot\|_\infty)\rightarrow (\IF^n, \|\cdot\|_\infty)$ be a linear map.
 \begin{enumerate}
 \item [{\rm (a)}]
 $T$ preserves parallel pairs if and only if there is $\gamma \ge 0$ and a generalized permutation matrix $Q$
 such that one of the following forms holds:

 \begin{enumerate}
  \item[{\rm (a.1)}] $T$ has the form $\bx \mapsto \gamma Q\bx$,
  \item[{\rm (a.2)}] $n = 2$ and $T$ has the form $\bx \mapsto \gamma CQ\bx$, where
  $C =\begin{pmatrix} 1 & \beta \\ \overline{\beta} & 1\end{pmatrix}$
for some scalar $\beta$ with $|\beta|<1$.
  \item[{\rm (a.3)}] $T=\bv{\bu}^\TT$ for some nonzero column vectors ${\bu}, {\bv}\in \IF^n$.
\end{enumerate}
\item[{\rm (b)}]  $T$  preserves
TEA pairs if and only if
one of the following holds.

\begin{enumerate}
\item[{\rm (b.1)}] $T$ has the form in {\rm (a.1)}.

\item[{\rm (b.2)}] $\IF^n = \IR^2$ and $T$ has the form in {\rm (a.2)}.

\item[{\rm (b.3)}] $\IF^n = \IR^2$ and $T$ has the form in {\rm (a.3)}, where
$\bu=(u_1, u_2)^{\TT}$ with $|u_1| = |u_2|$.
\end{enumerate}
 \end{enumerate}
\end{theorem}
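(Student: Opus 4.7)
The plan is to reformulate $\ell_\infty$-parallelism and $\ell_\infty$-TEA combinatorially, verify that each listed form in (a) and (b) is a preserver, and then analyze the columns $\bv_i=T\be_i$ to show that every preserver has one of these forms.

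First, I would record the following reformulation, which is central to the proof: for nonzero vectors $\bx,\by\in(\IF^n,\|\cdot\|_\infty)$, setting $\Pk(\bx)=\{i:|x_i|=\|\bx\|_\infty\}$, the pair $(\bx,\by)$ is parallel if and only if $\Pk(\bx)\cap\Pk(\by)\neq\emptyset$, and it is a TEA pair if and only if in addition some index $i$ in the intersection satisfies $x_i/|x_i|=y_i/|y_i|$. This is proved by identifying the index at which $\max_j|x_j+\mu y_j|$ is attained and applying the equality case of the triangle inequality. With this reformulation, the preservation property becomes a condition on how the $\bv_i=T\be_i$ interact through their peak supports and (for TEA) through phases at those peaks.

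For sufficiency, case (a.1) is immediate because $\gamma Q$ is a scalar multiple of an $\ell_\infty$-isometry. In case (a.3), $T$ has rank at most one, so $T\bx$ and $T\by$ are both multiples of a common vector $\bv$ and hence automatically parallel; for the TEA restriction (b.3), I would evaluate $\bu^\TT\bx$ and $\bu^\TT\by$ on representative TEA inputs in $\IR^2$, showing that $|u_1|=|u_2|$ is exactly the condition forcing matching signs on every TEA pair, while the violations $|u_1|\ne|u_2|$ or a complex placement into $\IC^2$ (or dimension $n>2$) fail on explicit test pairs. Case (a.2) is checked by direct inspection in $\IF^2$, where parallelism reduces to comparing $|x_1|$ with $|x_2|$ and $|y_1|$ with $|y_2|$.

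For necessity, I would split on the rank of $T$. When $\rank T\le 1$ we have $T=\bv\bu^\TT$, which is case (a.3); for the TEA case, the additional test pairs pin down $|u_1|=|u_2|$ and $\IF^n=\IR^2$, giving (b.3). When $\rank T\ge 2$ I apply the basic TEA pair $(\be_i+\be_j,\be_i-\be_j)$ (which is TEA since $(\be_i+\be_j)+(\be_i-\be_j)=2\be_i$ has norm $2=\|\be_i+\be_j\|_\infty+\|\be_i-\be_j\|_\infty$): its image must be TEA, so $\|\bv_i+\bv_j\|_\infty+\|\bv_i-\bv_j\|_\infty=\|2\bv_i\|_\infty=2\|\bv_i\|_\infty$, and the reverse triangle inequality then forces $\|\bv_i\|_\infty=\|\bv_j\|_\infty$ for every $i,j$. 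After normalizing to $\|\bv_i\|_\infty=1$, the task is to classify the possible configurations of the columns.

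I expect the main obstacle to be the dichotomy at this step: for $n\ge 3$ every $\bv_i$ must have exactly one nonzero entry (producing (a.1)), while for $n=2$ an extra family genuinely arises giving (a.2). For $n\ge 3$, suppose some $\bv_i$ has two peak indices $a\ne b$. I would feed in parallel input pairs such as $(\be_j+t\be_k,\be_i)$ for suitable $|t|\le 1$ and a third index $k$, and extract compatibility relations among $\Pk(\bv_i)$, $\Pk(\bv_j)$, $\Pk(\bv_k)$ that no three-column configuration can simultaneously satisfy; this step is where having a third index is essential, and it is precisely the place where the $n=2$ exception opens up. For $n=2$, the residual normalized degrees of freedom after these constraints parametrize exactly the matrix $C$ of (a.2) with $|\beta|<1$. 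Finally, for TEA preservation in $\IC^2$, I would exhibit a TEA test pair whose complex phases are incompatible under any nontrivial $C$ (for instance $(1,i)$ against $(1,-i)$, which is TEA since both have peak at index $1$ with value $1$), ruling out nonzero complex $\beta$ and collapsing (a.2) to its real version (b.2), and similarly collapsing the complex rank-one possibilities to the $\IR^2$ case of (b.3).
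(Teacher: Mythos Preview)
Your necessity argument for rank $\ge 2$ relies on the image of the TEA pair $(\be_i+\be_j,\be_i-\be_j)$ being again a TEA pair, which yields $\|\bv_i+\bv_j\|_\infty+\|\bv_i-\bv_j\|_\infty=2\|\bv_i\|_\infty$ and hence equal column norms. This is legitimate only under the hypothesis of part (b). For part (a) you merely know that $(\bv_i+\bv_j,\bv_i-\bv_j)$ is \emph{parallel}, i.e.\ $\|(1+\mu)\bv_i+(1-\mu)\bv_j\|_\infty=\|\bv_i+\bv_j\|_\infty+\|\bv_i-\bv_j\|_\infty$ for \emph{some} unimodular $\mu$, and this does not force $\|\bv_i\|_\infty=\|\bv_j\|_\infty$. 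So the necessity direction of (a) for rank $\ge 2$ is not handled at all.

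Even restricting to part (b), the conclusion ``each $\bv_i$ has a single nonzero entry'' does not yield (a.1): take $n=3$ and $T$ with columns $\be_1,\be_1,\be_2$. Here $\rank T=2$, all columns have unit norm with a single nonzero entry, yet $T$ is not a scalar multiple of a generalized permutation; and indeed $T$ fails both properties (e.g.\ $\bx=(1,0,\tfrac12)$ and $\bw=(\tfrac32,-1,1)$ are parallel, but $T\bx=(1,\tfrac12,0)$ and $T\bw=(\tfrac12,1,0)$ are not). Your peak-set combinatorics on the $\bv_i$ is too loosely specified to exclude such configurations. The missing ingredient is \emph{injectivity}: the paper proves in a separate lemma, via a delicate kernel analysis with tailored test vectors, that any nonzero TEA preserver (for $\IF^n\ne\IR^2$) and any parallel preserver of rank $>1$ is automatically invertible. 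Only then does the classification proceed, and it does so by studying $T^{-1}\be_j$ rather than $T\be_j$: after a generalized permutation one obtains a matrix $A=(a_{ij})$ with $a_{jj}>|a_{ji}|$ for $i\ne j$ whose transpose preserves the location of the strict $\ell_\infty$-peak, and a separate rigidity lemma then forces $A=a_{11}I_n$ for $n\ge 3$ and $A=A^*$ with equal diagonal for $n=2$. Your column-based outline does not supply a substitute for either of these two lemmas.
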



\begin{corollary} \label{p-3.1}
Let  $n \ge 3$.
The following conditions are
equivalent to each other for a nonzero linear map  $T:(\IF^n, \|\cdot\|_\infty) \rightarrow (\IF^n, \|\cdot\|_\infty)$.
\begin{itemize}
\item[{\rm (a)}]   $T$   preserves TEA pairs.
\item[{\rm (b)}]   $T$
preserves  parallel pairs and its range space has dimension larger than one.
\item[{\rm (c)}] There is $\gamma > 0$ and a generalized permutation
matrix $Q\in \bM_n$ such that
$T$ has the form $\bx\mapsto \gamma Q\bx$.
\end{itemize}
\end{corollary}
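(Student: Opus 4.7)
The plan is to derive Corollary \ref{p-3.1} directly from Theorem \ref{p-3.2old} by systematically ruling out the exceptional cases when $n \ge 3$. Note that in Theorem \ref{p-3.2old}, cases (a.2), (b.2), (b.3) all require either $n = 2$ or $\IF^n = \IR^2$, so when $n \ge 3$ only the basic forms (a.1) for parallel pairs and (b.1) = (a.1) for TEA pairs, together with the rank-one option (a.3), can occur. The three-way equivalence will then follow by a short round of implications.

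For the direction (c) $\Rightarrow$ (a), I would note that if $T(\bx) = \gamma Q\bx$ with $\gamma > 0$ and $Q$ a generalized permutation matrix, then $T$ is exactly the form (b.1) of Theorem \ref{p-3.2old}, hence preserves TEA pairs. For (a) $\Rightarrow$ (c), suppose $T$ is a nonzero TEA preserver. By Theorem \ref{p-3.2old}(b), $T$ takes the form (b.1), (b.2), or (b.3); but (b.2) and (b.3) are forbidden by the hypothesis $n \ge 3$, so (b.1) applies and $T(\bx) = \gamma Q\bx$ for some $\gamma \ge 0$ and generalized permutation $Q$. The assumption $T \ne 0$ forces $\gamma > 0$, yielding (c).

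For (c) $\Rightarrow$ (b), the map $\bx \mapsto \gamma Q\bx$ with $\gamma > 0$ is invertible, so its range has dimension $n \ge 3 > 1$; by Theorem \ref{p-3.2old}(a.1) it also preserves parallel pairs. For (b) $\Rightarrow$ (c), suppose $T$ is a parallel preserver whose range has dimension at least two. By Theorem \ref{p-3.2old}(a), $T$ has the form (a.1), (a.2), or (a.3). Case (a.2) is excluded because $n \ge 3$, and case (a.3) is excluded because a rank-one map $\bv\bu^\TT$ has range of dimension at most one. Hence (a.1) applies, $T(\bx) = \gamma Q\bx$, and the dimension-of-range hypothesis again forces $\gamma > 0$, giving (c).

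There is no substantive obstacle here; the result is really a bookkeeping consequence of Theorem \ref{p-3.2old}. The only item to double-check is that the rank-one form $\bv\bu^\TT$ in (a.3) indeed has range of dimension at most one (trivial), and that the generalized permutation matrices appearing in (a.1) are invertible (so that a nonzero $\gamma$ automatically yields a full-rank, hence $>1$-dimensional, range when $n \ge 3$). Both facts are immediate, so the corollary reduces to the case analysis above.
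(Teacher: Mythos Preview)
Your proposal is correct and matches the paper's approach: the paper does not even write out a separate proof for this corollary, treating it as an immediate consequence of Theorem \ref{p-3.2old}, and your case-by-case elimination of the exceptional forms (a.2), (a.3), (b.2), (b.3) under the hypothesis $n \ge 3$ is exactly the intended reading.
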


It is known that an isometry $T$ for the normed space $(\IF^n,\|\cdot\|_p)$, with $p \in [1,\infty]$ and $p \ne 2$,
is a {generalized permutation matrix}; see, e.g., \cite{Ket}.
By Corollary \ref{p-3.1},  for $(\bV, \|\cdot\|)
= (\IF^n,\|\cdot\|_\infty)$ with $n \geq 3$, a bijective linear map satisfies
(\ref{g-pp-pre}) if and only if it satisfies
(\ref{g-te-pre}); such a map is a scalar multiple of an $\ell_\infty$-isometry.
On the other hand, by the remark after Theorem \ref{prop:1-preservers}, for $(\IF^n, \|\cdot\|_1)$
with $n \ge 2$, a bijective linear map $T$ satisfying \eqref{g-pp-pre} (or, equivalently,
 \eqref{g-te-pre}) if and only if  $T$ is a monomial matrix.
Note, however, that by Theorem \ref{prop:1-preservers}, the range space of  linear TEA/parallel pair preservers on $(\IF^n, \|\cdot\|_1)$
  can have arbitrary dimension in contrast to the $\ell_\infty$--norm case.

More generally, it would be interesting to determine  a norm $\|\cdot\|$ on $\IF^n$ such that the set of bijective
linear maps $T: \IF^n \to \IF^n$ satisfying \eqref{g-pp-pre} or \eqref{g-te-pre}
consisting of scalar multiples of isometries.
For more results concerning the study of parallel pairs and their preservers; see
\cite{BB01,NT02,Seddik07,Wojcik,ZM16,Zamani} and \cite{Ket,LTWW}.

In Section \ref{s:inf-dim}, we provide similar characterizations of parallel/TEA pair   linear preservers of the infinite dimensional
Banach ``sequence spaces'' $\ell_1(\Lambda)$, $c_0(\Lambda)$ and $\ell_\infty(\Lambda)$, where the index set $\Lambda$ can be
uncountably infinite.  The case for the $\ell_1$--norm is similar, while for the $\ell_\infty$--norm case, we need to
assume the   linear map  $T$ of $c_0(\Lambda)$ or $\ell_\infty(\Lambda)$
preserves  parallel/TEA pairs  in both directions; namely,
$$
(T\bx,T\by) \hbox{ is a parallel/TEA pair}\quad  \hbox{if and only if}\quad  (\bx,\by)  \hbox{ is a parallel/TEA pair.}
$$
Under this stronger assumption, we see that $\ell_\infty$--norm parallel/TEA linear preservers
of $c_0(\Lambda)$ and $\ell_\infty(\Lambda)$   are   scalar multiples of isometries,
in line with the finite dimensional case.

\section{Preservers of parallel/TEA vectors}\label{s:finite-dim}

We begin with the following observation.

\begin{lemma}\label{lem:seqn-para}
Let $\bx=(x_1, \dots, x_n)^{\TT}, \by=(y_1, \dots, y_n)^{\TT} \in \IF^n$.
\begin{enumerate}
  \item[{\rm (a)}] $\bx,\by$ are parallel (resp.\ TEA)  with respect to the $\ell_1$--norm if and only if
  there is a unimodular scalar $\mu$ (resp.\ $\mu=1$) such that $\mu \overline{x_k}y_k\geq 0$ for all $k=1,\ldots, n$.
  \item[{\rm (b)}] $\bx,\by$ are parallel (resp.\ TEA) with respect to the $\ell_\infty$--norm if and only if $\|\bx\|_\infty = |x_k|$
  and $\|\by\|_\infty = |y_k|$ (resp.\ such that $\overline{x_k}y_k \geq 0$) for some $k$ between $1$ and $n$.
  \end{enumerate}
\end{lemma}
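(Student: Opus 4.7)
The plan is to reduce the parallel statement to the TEA statement via the substitution $\by\mapsto \mu\by$ (noting that $\ell_p$--norms are absolutely homogeneous, so $\|\mu\by\|_p=\|\by\|_p$ whenever $|\mu|=1$). Thus in each of (a) and (b) it suffices to prove the TEA version; the parallel version then follows by applying TEA to the pair $\bx,\mu\by$, and observing that the TEA condition on $(\bx,\mu\by)$ translates exactly to the condition $\mu\overline{x_k}y_k\ge 0$ in (a), respectively the condition with $\overline{x_k}y_k\ge 0$ relaxed to the existence of a common maximizing coordinate in (b).

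For part (a), the main ingredient is the elementary scalar fact that for $a,b\in\IF$,
\[
|a+b|=|a|+|b|\quad\Longleftrightarrow\quad \overline{a}b\ge 0,
\]
(the case $a=0$ or $b=0$ is included, giving $\overline{a}b=0\ge 0$). Starting from the coordinatewise bound
\[
\|\bx+\by\|_1=\sum_{k=1}^{n}|x_k+y_k|\le \sum_{k=1}^{n}\bigl(|x_k|+|y_k|\bigr)=\|\bx\|_1+\|\by\|_1,
\]
equality is equivalent to equality in each summand, which by the scalar fact is equivalent to $\overline{x_k}y_k\ge 0$ for every $k$. This gives (a) for TEA, and the parallel version follows as explained above.

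For part (b), for the ``if'' direction, if an index $k$ satisfies $|x_k|=\|\bx\|_\infty$, $|y_k|=\|\by\|_\infty$, and $\overline{x_k}y_k\ge 0$, then the scalar fact used in (a) gives $|x_k+y_k|=|x_k|+|y_k|=\|\bx\|_\infty+\|\by\|_\infty$, so $\|\bx+\by\|_\infty\ge\|\bx\|_\infty+\|\by\|_\infty$, and the reverse inequality is the triangle inequality. For the ``only if'' direction, choose $k$ with $|x_k+y_k|=\|\bx+\by\|_\infty$. Then
\[
\|\bx\|_\infty+\|\by\|_\infty=|x_k+y_k|\le |x_k|+|y_k|\le \|\bx\|_\infty+\|\by\|_\infty,
\]
so both inequalities are equalities; the first forces $\overline{x_k}y_k\ge 0$ (again by the scalar fact), and the second forces $|x_k|=\|\bx\|_\infty$ and $|y_k|=\|\by\|_\infty$. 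The parallel version follows by the same substitution trick, noting that for parallel pairs we only need a single coordinate where both moduli are simultaneously maximal (the phase alignment being absorbed into the choice of $\mu$).

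There is no serious obstacle; the only point that requires a line of care is handling the degenerate cases $x_k=0$ or $y_k=0$ in the scalar identity $|a+b|=|a|+|b|\Leftrightarrow \overline{a}b\ge 0$, and verifying that the parallel--to--TEA reduction correctly absorbs the unimodular phase $\mu$ uniformly across coordinates in (a) but only at one distinguished coordinate in (b).
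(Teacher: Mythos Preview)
Your proof is correct and complete. The paper itself treats this lemma as an elementary observation and provides no proof at all, so there is nothing to compare against; your argument is exactly the natural verification one would supply, reducing the parallel case to the TEA case via the substitution $\by\mapsto\mu\by$ and then using the scalar identity $|a+b|=|a|+|b|\iff \overline{a}b\ge 0$ coordinatewise for the $\ell_1$--norm and at a single maximizing coordinate for the $\ell_\infty$--norm.
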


As direct consequences of Lemma \ref{lem:seqn-para}, a linear map $T$ of $\mathbb{F}^n$
 preserves $\ell_1$--norm (resp.\ $\ell_\infty$--norm) TEA pairs  if, and only if, $PTQ$ does for any monomial matrices
 (resp.\ generalized permutation matrices) $P, Q$.


\begin{proof}[Proof of Theorem \ref{prop:1-preservers}]
(a)  Suppose each row of the $n\times n$ matrix $T$ has at most one nonzero entries. Then there are monomial matrices
$P, Q$ such that $PTQ = \begin{bmatrix}
                          \mathbf{T}_1 &  \mathbf{T}_2 &\cdots & \mathbf{T}_n
                        \end{bmatrix}$
in which the column vectors  $\mathbf{T}_1,  \mathbf{T}_2, \ldots, \mathbf{T}_k$ are nonzero
and satisfying that
\begin{gather*}
\mathbf{T}_1 = \be_1 + \cdots + \be_{n_1}, \quad   \mathbf{T}_2 = \be_{n_1+1} + \cdots + \be_{n_2}, \quad \dots, \quad
\mathbf{T}_k = \be_{n_{k-1}+1} + \cdots + \be_{n_k},
\intertext{and}
\mathbf{T}_{k+1}=\cdots= \mathbf{T}_n={0},
\end{gather*}
where $k \le n$ and $1\le n_1 < n_2 < \cdots < n_k \leq n$.
Note that $T$ preserves $\ell_1$--norm TEA pairs exactly when $PTQ$ does.
We may replace $T$ by $PTQ$, and assume that
$T =\begin{bmatrix}
                          \mathbf{T}_1 &  \mathbf{T}_2 &\cdots & \mathbf{T}_n
                        \end{bmatrix}$.
Let $\bx = (x_1, \dots, x_n)^\TT$ and $\by = (y_1, \dots, y_n)^\TT \in \IF^n$ form a TEA pair; or equivalently,
 $\bar x_j y_j \ge 0$ for $j = 1, \dots, n$.
Then
\begin{align*}
T\bx &= x_1 T_1 + \cdots + x_k T_k = (\underbrace{x_1, \ldots, x_1}_{n_1},\,  \underbrace{x_2, \ldots, x_2}_{n_2-n_1},\, \ldots,
 \underbrace{x_k, \ldots, x_k}_{n_k -  n_{k-1}},\, 0, \ldots,0)^\TT, \\
T\by &= y_1 T_1 + \cdots + y_k T_k = (\underbrace{y_1, \ldots, y_1}_{n_1},\   \underbrace{y_2, \ldots, y_2}_{n_2-n_1},\ \ldots,
 \underbrace{y_k, \ldots, y_k}_{n_k -  n_{k-1}},\  0, \ldots,0)^\TT
\end{align*}
clearly   form a TEA pair.

Conversely, suppose $T$ preserves TEA pairs. Assume the contrary that
$T$ has a row with two nonzero entries.
We may replace $T$ by $PTQ$ for suitable monomial matrices $P$ and $Q$ and assume that $T$ has
the $(1,1)$th and the $(1,2)$th entries
equal to $1$.
Since $\bx = (2,-1, 0, \dots, 0)^\TT$ and $\by=(1,-2, 0, \dots, 0)^\TT$ form a TEA pair,   so do
$T\bx$ and $T\by$. But the first entries of $T\bx$ and $T\by$ are $1$ and $-1$, respectively. So, $T\bx$ and $T\by$ do not form a TEA pair,
a desired contradiction.

(b) If each row of $T$ has at most one nonzero entry,
then $T$ will preserve TEA pairs. Hence, $T$ will also preserve parallel pairs.
On the other hand, if $T = \bv\bu^\TT$ for some $\bu,\bv \in \IF^n$, then
$T\bx = (\bu^\TT\bx)\bv$  and $T\by = (\bu^\TT\by)\bv$ are both  scalar multiples of $\bv$, and thus  always parallel, for any
$\bx, \by \in \IF^n$.

Conversely, let $T$ be a linear parallel pair preserver with rank  larger than $1$.
We will show that every row of $T$ has at most one
nonzero entry.
Suppose on contrary that $T$ has a row with more than one nonzero entries. We may replace $T$ by $PTQ$
for some suitable monomial matrices $P$ and $Q$ and assume that the first row of $T$ has the maximum
number of nonzero entries among all the rows.
Moreover, we may also assume that all  these nonzero entries in the first row
are $1$ and lie  in the $(1,1)$th, $(1,2)$th, \ldots, $(1,k)$th positions.

Since $T$ has rank at least two, there is a row, say, the second row, which is not equal to a multiple of the first row.
We consider two  cases.

{\bf Case 1.} The first $k$ entries of the second row are not all equal.  We may replace $T$ by $TQ$ for a permutation matrix of the form
$Q = Q_1 \oplus I_{n-k}$ such that  the $(2,1)$  entry is nonzero and different from the $(2,2)$  entry.
Further replace  $T$ by $PT$ for an invertible diagonal matrix $P$  and assume that
the leading $2\times 2$ matrix of $T$ equals
$\begin{pmatrix} 1 & 1 \cr 1 & a \cr\end{pmatrix}$  for some $a \ne 1$.

Let $\bx = (m,\bar a, 0, \dots,0)^\TT$ and $\by = (1, m\bar a, 0, \dots, 0)^\TT$ with $m > 0$.
Then $\bx$ and $\by$ are parallel, and so are  the vectors   $T\bx$ and $T\by$.
The first two entries of $T\bx$ are $m+\bar a$ and $m+|a|^2$,
and the first two entries of $T\by$ are $1+ m\bar a$ and $1+ m|a|^2$.
The second entries of $T\bx$ and $T\by$ are always positive.
It forces $\overline{(m + \bar a)}(1+ m \bar a) = m(1+|a|^2) + a + m^2\bar{a}\geq 0$ for all $m>0$.
Consequently, $a\geq 0$.

Furthermore,
 if $m > 0$, then  $\bx = (m,-1,0,\dots, 0)^\TT$ and $\by = (1,-m,0, \dots, 0)^\TT$ are parallel, and so are
$T\bx$ and $T\by$. The first two entries of $T\bx$ are $m-1$ and $m-a$, and the first two entries of $T\by$ are $1-m$ and $1- am$.
It follows that $(m-a)(1-am)\leq 0$ for all $m > 0$ with $m \ne 1$. Since $a \ne 1$ and $a\geq 0$, we see that  $m = (1+a)/2 \ne 1$
and  $(m-a)(1-am) = \frac{1}{4}(1-a)(2-a-a^2)=\frac{1}{4}(1-a)^2(2+a)>0$,
which is a contradiction.

{\bf Case 2.} The first $k$ entries of the (nonzero) second row of $T$ are the same scalar $\gamma$.
If $\gamma\neq 0$ then all other entries of the second row of $T$ are zeros due to the assumption that the first row of $T$
has maximal number of nonzero entries among all rows of $T$.  But then the second row is $\gamma$ times the first row, a
contradiction. Hence, $\gamma=0$.
 Suppose the $({2},j)$th entry of $T$ equals $a \ne 0$ for some $j > k$.
We may replace $T$ by $TQ$ for a suitable permutation matrix $Q$ and assume that the leading $2\times 3$ matrix
of $T$ is $\begin{pmatrix} 1 & 1  & 0 \cr 0 & 0 & a \cr\end{pmatrix}$. Let
$\bx = (2,-1,1,0,\dots,0)^\TT$ and $\by =(1,-2,1,0,\dots,0)^\TT$.
Then $\bx$ and $\by$ are parallel and so are $T\bx$ and $T\by$.
Now, $T\bx$ has the first two entries equal to $1$ and $a$,
whereas $T\by$ has the first two entries equal to $-1$ and $a$.
Thus, $T\bx$ and $T\by$ cannot be parallel, which is a contradiction.
\end{proof}

We need to establish two more lemmas to prove Theorem \ref{p-3.2old}.

\begin{lemma} \label{lem:par-1dim-F}
If $\bV$ is a subspace of $\IF^n$ such that any two elements in $\bV$ are parallel with respect to
$\|\cdot\|_\infty$, then $\dim \bV \le 1$.
\end{lemma}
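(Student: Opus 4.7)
The plan is to argue by contradiction: assume $\dim\bV\ge 2$, pick linearly independent $\bx,\by\in\bV$, and locate two vectors in $\bV$ whose peak index sets must be disjoint, contradicting Lemma \ref{lem:seqn-para}(b). The two vectors I would try are the diagonal pair $\bx+\by$ and $\bx-\by$, which are both in $\bV$ and, because $\bx,\by$ are linearly independent, are both nonzero.

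First, after rescaling inside $\bV$, I would assume $\|\bx\|_\infty=\|\by\|_\infty=1$. Since $\bx,\by$ are parallel with respect to $\|\cdot\|_\infty$, Lemma \ref{lem:seqn-para}(b) produces an index $k_1$ with $|x_{k_1}|=|y_{k_1}|=1$. Multiplying $\bx$ and $\by$ by suitable unimodular scalars (which keeps them in $\bV$), I may normalize so that $x_{k_1}=y_{k_1}=1$. Then $(\bx+\by)_{k_1}=2$, and combined with the bound $\|\bx+\by\|_\infty\le\|\bx\|_\infty+\|\by\|_\infty=2$, this pins down $\|\bx+\by\|_\infty=2$.

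Next I would characterize $\Pk(\bx+\by):=\{k:|x_k+y_k|=\|\bx+\by\|_\infty\}$. For an index $k_0$ in this set, $|x_{k_0}+y_{k_0}|=2$ with $|x_{k_0}|,|y_{k_0}|\le 1$ forces $|x_{k_0}|=|y_{k_0}|=1$ and the two entries to align as complex numbers, i.e.\ $x_{k_0}=y_{k_0}$. This is the key arithmetic fact, valid over both $\IR$ and $\IC$. It immediately gives $(\bx-\by)_{k_0}=0$, so every peak of $\bx+\by$ is a zero of $\bx-\by$.

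Since $\bx,\by$ are linearly independent, $\bx-\by\ne 0$, hence $\|\bx-\by\|_\infty>0$ and any index in $\Pk(\bx-\by)$ carries a nonzero entry. Combined with the previous step, this forces $\Pk(\bx+\by)\cap\Pk(\bx-\by)=\emptyset$, so by Lemma \ref{lem:seqn-para}(b) the vectors $\bx+\by$ and $\bx-\by$ are not parallel, contradicting the hypothesis on $\bV$. Therefore $\dim\bV\le 1$. The only real subtlety is the choice of witness pair; once one tests $\bx\pm\by$ after the normalization $x_{k_1}=y_{k_1}=1$, the contradiction is almost automatic. I do not anticipate any serious obstacle beyond spotting this pair.
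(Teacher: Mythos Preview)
Your proof is correct and follows essentially the same approach as the paper: both normalize so that $\|\bx+\by\|_\infty=2$, observe that any peak index of $\bx+\by$ forces $x_k=y_k$ and hence $(\bx-\by)_k=0$, and then derive a contradiction from the assumed parallelism of $\bx+\by$ and $\bx-\by$. Your use of the peak-set characterization from Lemma~\ref{lem:seqn-para}(b) is slightly more streamlined than the paper's explicit norm computation, but the argument is the same in substance.
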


\begin{proof}
We are going to show that any nonzero $\bu,\bv\in \bV$ are linearly dependent.
To this end, we may replace $(\bu,\bv)$ by $(\alpha\bu,\beta\bv)$ for some nonzero scalars $\alpha, \beta$,
and assume that $\bu,\bv$ are unit vectors with $\|\bu+\bv\|_\infty  = 2$.
Since $\|\bu+\bv\|_\infty  = 2$, we may further replace $\bu,\bv$ by $Q\bu, Q\bv$
for a suitable generalized permutation matrix $Q$ and assume that
$\bu = (1, \dots, 1, u_{k+1}, \dots, u_n)^{\TT}$ and
$\bv = (1, \dots, 1, v_{k+1}, \dots, v_n)^{\TT}$
with $|v_j|<1$ for  $j = k+1, \dots, n$, and $k\geq 1$.

By assumption,  $\bu+\bv$ and $\bu-\bv$ are parallel with respect to the $\ell_\infty$--norm.
There exists a  unimodular scalar $\alpha$ such that
$$
2+ \|\bu-\bv\|_\infty = \|\bu+\bv\|_\infty  + \|\bu-\bv\|_\infty  = \|(\bu+\bv)+\alpha(\bu-\bv)\|_\infty =|(u_i+v_i)+\alpha(u_i-v_i)| $$
for some $i$.
 If $k+1\le i \le n$, then $|(u_i+v_i)+\alpha(u_i-v_i)|< 2+\|\bu-\bv\|_\infty$, a contradiction.
Consequently,  $1\le i\le k$.
Then $|(u_i+v_i)+\alpha(u_i-v_i)|=2$, and thus  $\bu=\bv$.
\end{proof}

\begin{lemma}\label{lemma:tea-para-not-inv-F}
Let $T: (\IF^n, \|\cdot\|_\infty) \rightarrow (\IF^n, \|\cdot\|_\infty)$
be a nonzero linear map. Then $T$ is invertible if one of the following holds.
\begin{itemize}
\item[(a)] $\IF^n\neq \IR^2$ and $T$ preserves TEA pairs,
\item[(b)] $T$ preserves parallel pairs with range space of dimension larger than one.
\end{itemize}
\end{lemma}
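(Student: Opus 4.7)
The plan proceeds by contradiction. Assume $T$ is not invertible, pick a nonzero $\bz \in \ker T$ with $\|\bz\|_\infty = 1$, and set $S = \{k : |z_k| = 1\}$. The unifying mechanism is that whenever we can find scalars $t, s \in \IF$ making $\bx + t\bz$ and $\by + s\bz$ a TEA pair (for part (a)) or a parallel pair (for part (b)), the identities $T(\bx + t\bz) = T\bx$ and $T(\by + s\bz) = T\by$ combined with the preservation hypothesis force $T\bx, T\by$ to be TEA or parallel. If this is achievable for every $\bx, \by \in \IF^n$, then in case (a) we set $\by = -\bx$ and obtain $\|T\bx\|_\infty + \|T(-\bx)\|_\infty = 2\|T\bx\|_\infty = \|T\bx + T(-\bx)\|_\infty = 0$, so $T\bx = 0$ for every $\bx$, contradicting $T \neq 0$; in case (b) the range of $T$ consists of pairwise parallel vectors, and Lemma \ref{lem:par-1dim-F} then forces the range dimension to be at most one, contradicting the hypothesis.

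The easy case is $|S| = 1$, say $S = \{k\}$: taking $t = R\bar z_k$ and $s = R'\bar z_k$ for large positive $R, R'$ makes $\bx + t\bz$ and $\by + s\bz$ each attain their $\ell_\infty$-norm uniquely at coordinate $k$ with a large positive real value, hence they form a TEA pair. The harder case is $|S| \ge 2$, where the argmax of $\bx + t\bz$ may move among several positions in $S$ and the above scaling trick can fail. For this, I would introduce for each $j \in S$ the auxiliary vector $\bw_j := \be_j - \bar z_j \bz$, which satisfies $T\bw_j = T\be_j$ and attains its $\ell_\infty$-norm only among the coordinates in $S \setminus \{j\}$. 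Comparing pairs $(\bw_i, \bw_j)$ for $i, j \in S$, together with suitable combinations involving basis vectors $\be_\ell$ for $\ell \notin S$, produces parallel (and in favorable cases TEA) pairs whose images are $T\be_i$ and $T\be_j$, thereby constraining the columns of $T$ enough to conclude that its range is $1$-dimensional (case (b)) or that $T = 0$ (case (a)).

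The main obstacle is handling the case $|S| \ge 2$ in part (a): precisely here lives the $\IF^n = \IR^2$ exception, corresponding to the rank-$1$ TEA preservers of Theorem \ref{p-3.2old}(b.3). The hypothesis $\IF^n \neq \IR^2$ must enter crucially, via complex phase freedom when $\IF = \IC$ (allowing $t, s$ complex to align phases at a shared max coordinate) and via the availability of an extra coordinate, either a third tied index when $|S| \ge 3$ or an index outside $S$ when $|S| = 2 < n$, when $\IF = \IR$ with $n \ge 3$. In each of these situations I expect the required TEA test pairs to be constructible explicitly from the $\bw_j$'s and basis vectors, ruling out rank-$1$ exceptions and completing the contradiction; the role of Lemma \ref{lem:par-1dim-F} in case (b) is simply to package the resulting rigidity into the conclusion that the range has dimension at most one.
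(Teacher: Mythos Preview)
Your framework coincides with the paper's: argue by contradiction, normalise a kernel vector $\bz$, split on the size of its peak set $S$, and exploit $T(\bx+t\bz)=T\bx$. The $|S|=1$ case is handled correctly and is precisely the paper's mechanism. The gap is the $|S|\ge 2$ case, which you correctly flag as the obstacle but do not actually argue through. For part~(a), your vectors $\bw_j=\be_j-\bar z_j\bz$ only yield that $T\be_i,T\be_j$ are TEA for $i,j\in S$; together with $T\bz=0$ this is not obviously enough to force $T=0$, and you give no reason why it should be. The paper supplies the missing idea: when $S\subsetneq\{1,\dots,n\}$ it takes $\xi\bz\pm\be_\ell$ with $\ell\notin S$ to get $T\be_\ell=0$, thus \emph{reducing} to a kernel vector with $|S|=1$; when $S=\{1,\dots,n\}$ and $n\ge 3$ it uses arbitrary proper subsets $J,K$ with $J\cup K\neq\{1,\dots,n\}$ (not just singletons) to obtain $\|T\be_J+T\be_K\|_\infty=\|T\be_J\|_\infty+\|T\be_K\|_\infty$, iterates to $\|T\be_j\|_\infty=\sum_{i\neq j}\|T\be_i\|_\infty$ for every $j$, and sums these $n$ identities to get $\sum_j\|T\be_j\|_\infty=(n-1)\sum_j\|T\be_j\|_\infty$, forcing all $T\be_j=0$. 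The singleton $\bw_j$'s alone do not produce this summing trick, and the complex $n=2$ case needs a separate explicit phase choice which you have not given.

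For part~(b) there is a second issue: Lemma~\ref{lem:par-1dim-F} requires that \emph{every} pair of vectors in a subspace be parallel, not merely a spanning set, so ``the $T\be_j$ are pairwise parallel'' is insufficient to invoke it on the range. The paper handles $|S|\ge 2$ by fixing a two-dimensional coordinate subspace and checking, for \emph{arbitrary} $\bx,\by$ in it and by an explicit case split on where the translated vectors peak, that suitable shifts $\bx+t\bz$, $\by+s\bz$ are parallel; Lemma~\ref{lem:par-1dim-F} then gives linear dependence of two columns, producing a new kernel vector supported on two coordinates, and one recurses to $|S|\le 2$. Your sketch (``comparing pairs $(\bw_i,\bw_j)$ \dots\ constraining the columns enough'') does not supply this verification. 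In short: right strategy and right auxiliary objects, but the phrase ``constraining the columns enough to conclude'' is exactly where the proof's content lives, and it is not yet there.
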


\begin{proof}
Recall that $\be_j$  denotes the coordinate vector with the $j$th coordinate $1$ and all others $0$ for $j=1,\ldots, n$,
and let $\be=\sum_{j=1}^{n} \be_j$ be the constant one vector in $\IF^n$.  If $J$ is a subset of $N=\{1,\ldots, n\}$,
let $\be_J = \sum_{j\in J} \be_j$.  In particular, $\be=\be_N$.

(a) Suppose $T\bv=0$ for some unit vector $\bv= \sum_{j=1}^{n} v_j \be_j \in \IF^n$.  We claim that $T$ is a zero map.
Replacing $T$ by $TR$ for a suitable general permutation
 matrix $R$, we can assume that $v_1=1 \geq v_2 \geq \cdots \geq v_n\geq 0$.

Suppose first that $v_1 = \cdots = v_k =1  > v_{k+1} \geq \cdots \geq v_n\geq 0$ for some $k < n$.
Since $\xi \bv + \be_n$ and $\xi \bv - \be_n$ form a TEA pair for large $\xi>0$, so do $T(\xi \bv + \be_n) = T(\be_n)$ and
$T(\xi \bv - \be_n)=-T(\be_n)$.  It follows $0=\|T(\be_n) + T(-\be_n)\|_\infty =\|T(\be_n)\|_\infty + \|T(-\be_n)\|_\infty$, and thus
$T(\be_n)=0$.  With $\be_n$ taking the role of $\bv$, we see that $T\be_j=0$ for all $j=1,\ldots, n-1$, and thus $T=0$.

Suppose next that $\bv=\be$ and $n\geq 3$.  For any subset $J, K$ of $N$ such that
$J\cup K\neq N$, we have
$\be-\be_J$ and $\be-\be_K$ form a TEA pair, and so do $T(\be-\be_J)=-T(\be_J)$ and $T(\be-\be_K) =-T(\be_K)$.
Hence,
$$
\|T(\be_J) + T(\be_K)\|_\infty = \|T(\be_J)\|_\infty + \|T(\be_K)\|_\infty.
$$
An inductive argument with the fact $T(\be)=0$ gives
$$
\|T(\be_1)\|_\infty = \|T(\be_2) + \cdots + T(\be_{n-1})\|_\infty = \|T(\be_2)\|_\infty + \cdots + \|T(\be_{n-1})\|_\infty.
$$
We also have similar equalities for all other $\|T(\be_j)\|_\infty$.  Summing up these $n\geq 3$ equalities, we have
$$
\sum_{j=1}^{n} \|T(\be_j)\|_\infty = (n-1) \sum_{j=1}^{n} \|T(\be_j)\|_\infty,
$$
and thus all $\|T(\be_j)\|_\infty=0$.  This also forces $T=0$.

Finally, suppose $n=2$, $\IF=\IC$ and $\bv=\be=\be_1 + \be_2$.  Since the vectors
$\be-\frac{ 1+\sqrt{3}i }{2}\be_1$
and $\be -\frac{ 1-\sqrt{3}i} {2}\be_1$
attain the triangle equality,
so do $T(\be- \frac{ 1+\sqrt{3}i }{2}\be_{1})= -\frac{ 1+\sqrt{3}i }{2}T(\be_{1})$ and
$T(\be-\frac{ 1-\sqrt{3}i }{2}\be_{ 1})=-\frac{ 1-\sqrt{3}i }{2}T(\be_{1 })$,
which implies $T(\be_{1 })=0$.  Consequently, $T(\be_2)=0$, and thus $T=0$ again.

In conclusion, a nonzero linear map $T$ preserving TEA pairs is invertible  unless $\IF^n=\IR^2$.

(b) If $n = 2$ and the range space of $T$ has dimension larger than one, then
$T$ is invertible. Suppose $n > 2$ and $T$ is not invertible.
Let $\bv$ be a nonzero vector such that $T\bv = 0$.
We may replace $T$ by the map $\bx \mapsto T(\alpha Q\bx)$
for some $\alpha > 0$ and generalized permutation matrix $Q$, and assume that
$\bv = (v_1, \dots, v_n)^\TT$ with $v_1 = \cdots = v_k = 1 >  v_{k+1}  \ge \cdots \ge  v_n \geq 0$ where $1\leq k \leq n$.
We are verifying that the range space of $T$ has dimension at most one.

{\bf Case 1.} Suppose $k = 1$. Then  for any $\bx = (0, x_2, \dots, x_n)^\TT, \by = (0, y_2, \dots, y_n)^\TT$
in the linear span $E$ of $\be_2,\ldots, \be_n$,
the vectors $r\bv + \bx, r\bv+\by$ are parallel   for sufficiently large $r > 0$.
Consequently,  $T(r\bv+\bx)=T(\bx)$ and $T(r\bv + \by) = T(\by)$ are also parallel.
By Lemma \ref{lem:par-1dim-F},
the space $T(E)$ has dimension at most one.
Since $\IF^n$ is spanned by $\bv$ and $E$ and $T\bv=0$, we conclude that $T(\IF^n)$ has dimension at most one.

{\bf Case 2.}
Suppose   $\bv =\be_1+\be_2$. In view of Case 1, we may assume $T(\be_{1})=-T(\be_{2})\neq 0$.
We claim that  $T(\be_{1})$ and $T(\bu)$ are linearly dependent for any
norm one vector $\bu=(0,0, u_3, \ldots, u_n)^\TT$.
Consequently, being the span of $T(\be_1), T(\be_2)$, and all such $T(\bu)$, the range space $T(\IF^n)$ has dimension at most one.

  Consider $\bx=\alpha \be_1+\beta \bu$ and $\by=\gamma \be_1+\delta \bu$  for any scalars $\alpha, \beta, \gamma$
  and $\delta$.
  If $\bx,\by$ are parallel, so are $T(\bx), T(\by)$.
  If $\bx$ is not parallel with $\by$ then we can assume that $\bx= \be_1 + \beta \bu$ and $\by= \gamma \be_1 + \bu$
  with $|\beta|, |\gamma|<1$.   If $\gamma\neq 0$ then
  $\bx$ is parallel with   $s\gamma \bv+ \by$ for  $s \geq \frac{1}{|\gamma|}-1$.
   We see that $T(\bx)$ is parallel with $T(s\gamma \bv + \by) = T(\by)$.
In case when $\gamma=0$, we see that $T(\bx)$ is parallel with
  $ T(s\epsilon \bv+\epsilon \be_1+ \bu)= T(\epsilon \be_1+ \bu)=\epsilon T(\be_1)+T(\by)$ whenever $0<\epsilon<1$
   and $s \geq \frac{1}{\epsilon}-1$.
  In other words,
  $$
  \|T(\bx) + \mu_\epsilon(\epsilon T(\be_1)+T(\by))\|_\infty = \|T(\bx)\|_\infty  + \|\epsilon T(\be_1)+T(\by)\|_\infty
  $$
  for some unimodular scalar $\mu_\epsilon$.
  Choosing a sequence
   $\epsilon_n \to 0^+$ with $\mu_{\epsilon_n}$ converging to some unimodular $\mu$,  we see
  that $T(\bx)$ and $T(\by)$ are parallel.
  Therefore, in any case $T(\bx)$ is parallel with $T(\by)$. Hence $T(\be_{1})$ and $T(\bu)$
  are linearly dependent by Lemma \ref{lem:par-1dim-F}, as claimed.

{\bf Case 3.}   Suppose that
 $\bv=  \be_{1} + \be_{2} + a_3 \be_{3} + \cdots +  a_n \be_{n}$ with
  $1\geq  a_3 \geq a_4 \geq    \cdots \geq a_m \geq 0$.
In view of Case 2, we can assume that $a_3 > 0$.

Consider $\bx=\alpha \be_1+\beta \be_2$ and $\by=\gamma \be_1+\delta \be_2$. We claim that $T(\bx)$ and $T(\by)$ are parallel.
If $\bx, \by$ are parallel, then it is the case. Otherwise, we can assume that $\bx=\be_1 + \mu \be_2$ and $\by=\nu \be_1 + \be_2$ with $|\mu|, |\nu|<1$.
If $|1-\mu| \geq |1+\mu|a_3$, then $\bx - (1+\mu)\bv/2$ and $\by$ are parallel, and so are $T(\bx)=T(\bx - (1+\mu)\bv/2)$ and $T(\by)$.
If $|1-\nu| \geq |1+\nu|a_3$, then $\bx$ and $\by - (1+\nu)\bv/2$ are parallel, and so are $T(\bx)$ and $T(\by)=T(\by - (1+\nu)\bv/2)$.
If $|1-\mu| < |1+\mu|a_3$ and $|1-\nu| < |1+\nu|a_3$ then $\bx - (1+\mu)\bv/2$ and
$\by - (1+\nu)\bv/2$ are parallel, then so are $T(\bx)=T(\bx - (1+\mu)\bv)$
and $T(\by)=T(\by - (1+\nu)\bv)$.
 We   see that $T(\bx)$ and $T(\by)$ are parallel in all cases.
 Hence $T(\be_{1})$ and $T(\be_{2})$ are linearly dependent by Lemma \ref{lem:par-1dim-F}.
Consequently, there is a nontrivial linear combination $\bv'=\xi \be_{1} + \eta \be_{2}$ belongs to the kernel of $T$.
We can then reduce the situation to either Case 1 (if $|\xi|>|\eta|$) or Case 2 (if $|\xi|=|\eta|$).

We thus conclude that a  linear parallel pair preserver is invertible if its range space has dimension larger than one.
\end{proof}

Consider the linear map $T: \IR^2\to \IR^2$ defined by $(\bx, \by)^\TT\mapsto (\bx-\by,0)^\TT$.  It is easy
to see that $T(\IR^2)$ has dimension one and
$T$ preserves TEA/parallel pairs for the $\ell_\infty$--norm.  This example says that Lemma \ref{lemma:tea-para-not-inv-F}
does not hold in the missing cases.

\begin{lemma} \label{A-matrix}
Let $A = (a_{rs}) \in \bM_n$.
Suppose either
 $a_{jj} > |a_{jk}|$ whenever $j \ne k$, or
  $a_{jj} > |a_{kj}|$ whenever $j \ne k$.
The following conditions are equivalent.
\begin{itemize}
\item [{\rm (a)}]
For any
 $\bx = (x_1, \dots, x_n)^{\TT}\in \IF^n$ with $\by = A^{\TT}\bx = (y_1, \dots, y_n)^{\TT}$, we have
\begin{align}\label{eq:D>}
\|\bx\|_\infty = |x_r| > |x_s|\ \text{whenever  $s \ne r$}\quad
\implies\quad \|\by\|_\infty = |y_r| \geq |y_s|\ \text{whenever $s\ne r$.}
\end{align}
\item[{\rm (b)}] Either $n = 2$ and $A = A^*$ with $a_{11}=a_{22}>|a_{12}|$, or   $A = a_{11}I_n$.
\end{itemize}
\end{lemma}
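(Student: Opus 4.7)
The plan is to verify (b)~$\Rightarrow$~(a) directly and to establish (a)~$\Rightarrow$~(b) in two steps: first force $A$ to be Hermitian with a common diagonal entry, and then, when $n\ge 3$, drive every off-diagonal entry to zero. For (b)~$\Rightarrow$~(a), if $A=a_{11}I_n$ then $\by=a_{11}\bx$ trivially preserves the argmax, and if $n=2$ and $A=A^*$ with $a_{11}=a_{22}=a>|a_{12}|$ a direct expansion gives $|y_1|^2-|y_2|^2=(a^2-|a_{12}|^2)(|x_1|^2-|x_2|^2)$, which is positive whenever $|x_1|>|x_2|$.

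For (a)~$\Rightarrow$~(b), Step~1. Fix $r\ne s$ and apply the hypothesis to $\bx=\be_r+\tau\be_s$ and to its companion $\bx=\be_s+\tau\be_r$ with $|\tau|<1$; squaring and invoking continuity at $|\tau|=1$ gives
$$|a_{rr}+\tau a_{sr}|^2\ge|a_{rs}+\tau a_{ss}|^2\quad\text{and}\quad|a_{ss}+\tau a_{rs}|^2\ge|a_{sr}+\tau a_{rr}|^2,$$
which on the unit circle read $P+2\operatorname{Re}(\tau B_f)\ge 0$ and $-P+2\operatorname{Re}(\tau B_g)\ge 0$ with $P=|a_{rr}|^2+|a_{sr}|^2-|a_{rs}|^2-|a_{ss}|^2$, $B_f=\bar a_{rr}a_{sr}-\bar a_{rs}a_{ss}$, and $B_g=\bar a_{ss}a_{rs}-\bar a_{sr}a_{rr}$. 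Minimizing each over the unit circle forces $P\ge 2|B_f|$ and $-P\ge 2|B_g|$, hence $B_f=B_g=0$ and $P=0$. Substituting $|a_{sr}|=|a_{rs}|a_{ss}/a_{rr}$ (which follows from $B_f=0$) into $P=0$ factors as $(a_{rr}^2-a_{ss}^2)(a_{rr}^2-|a_{rs}|^2)=0$; a short case analysis using whichever dominance hypothesis is in force rules out the second factor, so $a_{rr}=a_{ss}$ and $a_{sr}=\bar a_{rs}$. Thus $A=A^*$ with common diagonal $a:=a_{11}>0$, which is already conclusion (b) when $n=2$.

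Step~2, for $n\ge 3$. Fix distinct indices $r,s,t$, take $\bx=\be_r+u\be_s+v\be_t$ with $|u|,|v|\le 1$, and expand
$$|y_r|^2-|y_s|^2=(a^2-|a_{rs}|^2)(1-|u|^2)+(|a_{rt}|^2-|a_{st}|^2)|v|^2+2\operatorname{Re}\bigl(\bar v(\gamma+u\delta)\bigr)\ge 0,$$
where $\gamma=a\,a_{rt}-a_{rs}a_{st}$ and $\delta=\bar a_{rs}a_{rt}-a\,a_{st}$. The main obstacle here is the bilinear cross-term, which at first glance couples $u$ and $v$; the key trick is to group it with $2\operatorname{Re}(\bar v\gamma)$ so that the dependence on $v$ becomes purely linear. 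Setting $|u|=1$ kills the first term; minimizing over the phase of $v$ yields $-2|v||\gamma+u\delta|$; and letting $|v|\to 0^+$ forces $\gamma+u\delta=0$ for every unit $u$, from which the affine dependence on $u$ gives $\gamma=\delta=0$. The two equations $a\,a_{rt}=a_{rs}a_{st}$ and $\bar a_{rs}a_{rt}=a\,a_{st}$, combined with $a>|a_{rs}|$ inherited from Step~1, yield $a_{st}=a_{rt}=0$. Running $r,s,t$ over all distinct triples annihilates every off-diagonal entry, so $A=aI_n$.
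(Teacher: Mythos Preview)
Your proof is correct and takes a genuinely different route from the paper's.

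The paper argues combinatorially with ``global'' test vectors. After conjugating $A$ by a permutation and a diagonal unitary so that the first row of $A^{\TT}$ has maximal $\ell_1$-norm and nonnegative entries, it plugs in the all-ones vector to force every entry of $A$ nonnegative with constant row sums; then vectors of the form $(1,\pm1,0,\dots,0)^{\TT}$ yield $a_{11}=a_{22}$ and $a_{12}=a_{21}$, giving the $n=2$ conclusion. For $n\ge 3$ it uses $(1,\dots,1,-1,1,\dots,1)^{\TT}$ to make all off-diagonal entries equal and finally $(1,-1,\dots,-1)^{\TT}$ to force that common value to be zero.

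Your approach is analytic and ``local'': you only ever test on vectors supported on two or three coordinates, expand $|y_r|^2-|y_s|^2$ explicitly, and minimize over unit-circle parameters. This avoids the preliminary normalization and works verbatim over $\IR$ and $\IC$ (in the real case the ``phase'' minimization simply becomes a choice of sign, which still attains the extremum since all quantities are real). One small remark: in Step~1 the column-dominance case of the ``short case analysis'' is not quite one line---ruling out $a_{rr}=|a_{rs}|$ there requires feeding the relation $|a_{sr}|=|a_{rs}|\,a_{ss}/a_{rr}$ back in to obtain $|a_{sr}|=a_{ss}$, which then contradicts $a_{rr}>|a_{sr}|$ together with $a_{ss}>|a_{rs}|=a_{rr}$. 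This is routine but worth spelling out. The paper's argument is perhaps more elementary (only integer test vectors, no quadratic expansions), while yours has the virtue of being entirely local and never needing the all-ones vector, which could be an asset when extending to settings where such global vectors are unavailable.
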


\begin{proof} The implication (b) $\Rightarrow$ (a) is clear if $A=a_{11}I_n$.
Suppose $n = 2$ and $A = A^*$ with $a_{11}=a_{22}>|a_{12}|$.
Observe that
\begin{align*}
\begin{pmatrix} y_1\\ y_2\end{pmatrix} = A^{\TT} \begin{pmatrix} x_1 \\ x_2 \end{pmatrix}
= \begin{pmatrix} a_{11} & \overline{a_{12}} \\ a_{12} & a_{11}\end{pmatrix}
\begin{pmatrix} x_1 \\ x_2 \end{pmatrix}
= \begin{pmatrix} a_{11} x_1 +  \overline{a_{12}} x_2 \\ {a_{12}} x_1 + a_{11} x_2\end{pmatrix},
\end{align*}
and
\begin{align*}
& |y_1| \geq |y_2| \\
\Longleftrightarrow\quad
& |a_{11} x_1 +  \overline{a_{12}} x_2| \geq | {a_{12}} x_1 + a_{11} x_2| \\
\Longleftrightarrow\quad
& a_{11}^2|x_1|^2 + |a_{12}|^2|x_2|^2 + 2a_{11} \operatorname{Re}\, ( {a_{12}} x_1 \bar x_2)
\geq
|a_{12}|^2 |x_1|^2 + a_{11}^2|x_2|^2 + 2a_{11} \operatorname{Re}\,  ({a_{12}} x_1 \bar x_2) \\
\Longleftrightarrow\quad
& (a_{11}^2 - |a_{12}|^2)|x_1|^2 \geq (a_{11}^2 - |a_{12}|^2)|x_2|^2 \\
\Longleftrightarrow\quad
& |x_1| \geq |x_2|.
\end{align*}
Consequently, (b) $\Rightarrow$ (a) also holds in this case.

We are going to  prove (a) $\Rightarrow$ (b).
We may replace $A$ by $P^{\TT}AP$ with a suitable
permutation matrix $P$ and assume the first row $\bv_1$ of $A^{\TT}$ has the maximal $\ell_1$--norm.
Then further replace $A$ by $DA D^*$ with a suitable diagonal  $D \in \bM_n$ with $D^*D= I_n$
and assume that the first row of $A^{\TT}$ has nonnegative entries.

By the continuity and an induction argument,  for $\bx = (x_1, \dots, x_n)^{\TT}$, $\by = A^{\TT} \bx = (y_1, \dots, y_n)^{\TT}$
and $k=1,\ldots, m$, we have
\begin{align}\label{eq:kequal}
|x_1| = \cdots = |x_k|> |x_r|\ \text{for all $r>k$}\quad\implies\quad
 |y_1| = \cdots = |y_k|\geq |y_r| \ \text{for all $r>k$.}
\end{align}
Let  $\bx = (1,\dots, 1)^{\TT}$ and ${\by}=A^{\TT}\bx$. If $\bv_1, \dots, \bv_n$ are the rows of
$A^{\TT}$, then
\begin{align*}
\|\bv_1\|_1 &= a_{11} + \cdots + a_{n1} =
|y_1| = |y_j| =
|a_{1j} + \cdots + a_{nj}| \\
&\le |a_{1j}| + \cdots + |a_{nj}|
= \|\bv_j\|_1 \le \|\bv_1\|_1\quad\text{for $j \geq 1$.}
\end{align*}
Since $a_{jj} > 0$, we see that $a_{ij} \ge 0$ for all $i\ne j$,
and all row sums of $A$ are equal, say, to $s>0$.

Taking $\bx = (1,1, 0,\ldots,0)^{\TT}$, with \eqref{eq:kequal} we have $ a_{11}+a_{21}=a_{12}+a_{22}$.
Similarly, taking $\bx = (1,-1, 0,\ldots,0)^{\TT}$, we have $|a_{11}- a_{21}| = |a_{12} - a_{22}|$.
It follows from either the assumption  $a_{11}> a_{12}$ and $a_{22}> a_{21}$, or the assumption
 $a_{11}> a_{21}$ and $a_{22}> a_{12}$ that
$a_{11} - a_{21} =  a_{22}-a_{12}$.
Consequently, $a_{11} = a_{22}$
and $a_{12} = a_{21}$.
The assertion follows when $n=2$.

Suppose $n\geq 3$.
Apply the same argument to other pairs $(i,j)$ with $i\neq j$ instead of
$(1,2)$, we see that
$$
a_{11}=\cdots = a_{nn}\quad\text{and}\quad a_{jk}=a_{kj}\quad\text{whenever $j\neq k$.}
$$
For a fixed $j = 1, \dots, n$, we take $\bx = (1,\ldots, 1, \underbrace{-1}_{j\text{th}}, 1,\ldots, 1)^{\TT}$
and ${\by}=A^{\TT}\bx$.
For   distinct indices $j,k, l$, we have
$|y_{l}| = s -2 a_{j{l}}=|y_k| = s - 2a_{jk}$.
It follows $a_{j{l}} = a_{jk}=a_{kj}$, and thus
$a_{jk}=a_{12}$ are all equal for $j \ne k$.
Consider $\bu = (1, -1, \ldots, -1)^{\TT}$ and ${\bv}=A^{\TT}\bu= (v_1, v_2,  \ldots, v_n)^{\TT}$.
Then $|v_1|=|v_2|$ implies either
$$
a_{11} -  (n-1)a_{12}= a_{11} + (n-3) a_{12}\quad\text{or}\quad (n-1)a_{12}-a_{11}= a_{11} + (n-3)a_{12}.
$$
  Since $n\geq 3$,  either
$a_{12}=0$ or $a_{11}=a_{12}$.  But $a_{11}> a_{12}$.  This implies that $A=a_{11}I_n$.
\end{proof}

We are now ready to present the

\begin{proof}[Proof of Theorem \ref{p-3.2old}]
(a) It is clear that $T$ preserves parallel pairs  if $T$ has the form in  (a.1) or (a.3).
Suppose $n = 2$ and $T$ has the form in (a.2).
Then, $T$ preserves parallel pairs in $(\IF^2, \|\cdot\|_\infty)$
by the implication from (b) to (a) in
Lemma \ref{A-matrix};  indeed, with a continuity argument the condition \eqref{eq:D>} implies that
$$
\|\bx\|_\infty = |x_r| \geq |x_s|\ \text{whenever  $s \ne r$}\quad
\implies\quad \|\by\|_\infty = |y_r| \geq |y_s|\ \text{whenever $s\ne r$.}
$$

Conversely, suppose $T$
preserves parallel pairs.
If $T$ is not invertible then it follows from  Lemma \ref{lemma:tea-para-not-inv-F} that
 $T$  is either the zero map or has the form in (a.3).
Suppose from now on $T$ is invertible.

 If $T^{-1}\be_j ={\bx}_j$, then ${\bx}_i$ and ${\bx}_j$ cannot be
parallel for any $i\ne j$. Otherwise, $T{\bx}_i = \be_i$ and $T{\bx}_j = \be_j$
were parallel. Thus, the vectors ${\bx}_i$ and ${\bx}_j$ cannot attain the $\ell_\infty$-norm
at a same coordinate. So, there is a permutation $\sigma$ on $\{1,\ldots, n\}$ such that
each ${\bx}_j$ attains its norm at its $\sigma(j)$th coordinate but no other.
Consequently, there is a generalized permutation matrix
$Q \in \bM_n$ such that the map $L$ defined by
${z}\mapsto QT^{-1}{z}$ will send $\be_j$ to a vector ${\by}_j = (a_{j1}, \dots, a_{jn})^{\TT}$
such that $a_{jj} > |a_{ji}|$ for all $i\ne j$. Clearly, $L^{-1}$  defined by
${\by} \mapsto T(Q^{-1}{\by})$ preserves parallel pairs.

Let $A = (a_{ij}) \in \bM_n$ so that $L({\bx}) = A^{\TT} {\bx}$.
If ${\bx}_j=(x_{j1}, \ldots, x_{jn})^{\TT}$ satisfies
 $|{x}_{jj}| > |{x}_{ji}|$ for all $i\ne j$, then we claim that $L({\bx}_j) = A^{\TT} {\bx}_j$ is parallel to
${\by}_j$ and thus $\be_j$, but not any other ${\by}_i$.  Otherwise, the fact
 $L({\bx}_j)$ is parallel to ${\by}_i$ for some $i \ne j$ would imply that
${\bx}_j$ is parallel to $L^{-1}({\by}_i) = \be_i$, which is impossible.
Thus, the matrix  $A = (a_{ij})$ satisfies the hypothesis (a)
of Lemma \ref{A-matrix}. If $n\geq 3$,
 we see that $A = \gamma I_n$.
with $\gamma = a_{11}>0$.
If $n = 2$, we see that
$A$ is Hermitian with $a_{11}=a_{22}$. So, $T$ has the form in (a.2).

(b) It is clear that if $T$ assumes the form in (b.1) then it preserves TEA pairs.
It is also the case if $T$ assumes the form in (b.3) by direct verification.
Suppose $\IF^n=\IR^2$ and $T = \gamma CQ$ as in (a.2).
 Clearly, $T$ satisfies (\ref{g-te-pre}) if and only if
$C$ satisfies (\ref{g-te-pre}). There is
$S = \diag(1, \pm1)$ such that $SCS$
has the form  $\hat C = \begin{pmatrix} 1 & \beta \cr
\beta & 1 \cr \end{pmatrix}$ with $0\leq  \beta < 1$.
The map
${\bx} \mapsto C{\bx}$ satisfies (\ref{g-te-pre})
if and only if the map ${\bx} \mapsto \hat C {\bx}$ does.
Now, if the nonzero vectors $\bx,\by$ in $\IR^2$ satisfy that
 $\|{\bx}+{\by}\|_\infty = \|{\bx}\|_\infty + \|{\by}\|_\infty$, then we may
replace $({\bx},{\by})$ by $ (\xi{\bx}/\|\bx\|_\infty,\xi{\by}/\|\by\|_\infty)$ with $\xi \in \{-1,1\}$
and assume that ${\bx} = (1,x_2)^{\TT}, {\by} = (1,y_2)^{\TT}$ with $|x_2|, |y_2| \le 1$, or
${\bx} = (x_1, 1)^{\TT}, {\by} = (y_1, 1)^{\TT}$ with $|x_1|, |y_1| \le 1$.
One can  check that
$\|\hat C{\bx} + \hat C{\by}\|_\infty = \|\hat C{\bx}\|_\infty + \|\hat C{\by}\|_\infty$.
Thus, $T$ preserves TEA as well.

Conversely, suppose the map $T$ is nonzero and satisfies (\ref{g-te-pre}). Then $T$ will preserve
parallel pairs.   Thus, it will be of the form (a.1), (a.2), or (a.3).
We will show that (a.2) is impossible in the complex case unless it reduces to the form in (a.1),
and there are additional restrictions
for ${\bu}$ if (a.3) holds.

Suppose $\IF^n = \IC^2$ and $T = \gamma CQ$ has the form in (a.2)
in which $C=\begin{pmatrix} 1 & \beta \\ \overline{\beta} & 1\end{pmatrix}$
for some complex scalar $\beta$ with  $|\beta|<1$.  In this case, the map $\bx\mapsto Cx$ also preserves TEA pairs.
Consider  ${\bx} = (1,0)^{\TT}$, ${\by} = (1,1)^{\TT}$ and ${z} = (1,i)^{\TT}$, and their images
 $Cx = (1, \overline{\beta})^\TT$, $Cy=(1+  \beta, \overline{\beta}+1)^\TT$
 and $Cz= (1+ i\beta, \overline{\beta}+i)^\TT$.
Note that  $\bx,\by$ and $\bx,\bz$ are both TEA pairs,  while $Cx, Cy$ and $Cx, Cz$ form   TEA pairs exactly
when the first coordinates of $Cy$ and $Cz$
 assume  positive values.  This forces $\beta=0$, and thus $T=\gamma Q$ reduces to the form in (a.1).

Suppose $T=\bv{\bu}^\TT$  assumes the form in (a.3) for some vectors $\bu=(u_1, u_2)^\TT$ and $\bv$ in $\IF^n$.
In this case, $T$ is not invertible.
By Lemma \ref{lemma:tea-para-not-inv-F}, $\IF^n= \IR^2$.
Since ${y}_1 = (1,1)^{\TT}$ and ${\by}_2 = (1,-1)^{\TT}$ form a TEA pair, so are $T\by_1 = (u_1 + u_2)\bv$ and $T\by_2 = (u_1-u_2)\bv$.
This forces $u_1 + u_2$ and $u_1 - u_2$ have the same sign.  Similarly, ${\by}_1$ and $-{\by}_2$ also form a TEA pair, and thus
$u_1 + u_2$ and $-u_1 + u_2$ also have the same sign.
If $u_1+u_2\neq 0$ then $u_2 = u_1$.  In any case, we have $|u_1|=|u_2|$ as asserted.
\end{proof}

\section{The infinite dimensional cases}\label{s:inf-dim}

 Let  the underlying field $\IF$ be either $\IR$ or $\IC$, and
let $\Lambda$ be a finite or an infinite index set.  When $1\leq p < \infty$, let $\ell_p(\Lambda)$ be the (real or complex) Banach space
of   $p$-summable families $\bx=(x_\lambda)_{\lambda\in \Lambda}$ (of real or complex numbers) with $\ell_p$--norm
$$
\|\bx\|_p = \left(\sum_{\lambda\in \Lambda} |x_\lambda|^p\right)^{{1}/{p}} < +\infty.
$$
Note that the above sum is finite only if there are at most countably many coordinates $x_\lambda\neq 0$.
Let $\ell_\infty(\Lambda)$ be the Banach space of  uniformly bounded family $\bx=(x_\lambda)_{\lambda\in \Lambda}$
with the $\ell_\infty$--norm
$$
\|\bx\|_\infty = \sup_{\lambda\in \Lambda} |x_\lambda| < +\infty.
$$
We are also interested in the Banach  subspace $c_0(\Lambda)$ of $\ell_\infty(\Lambda)$
consisting of essentially null families $\bx=(x_\lambda)_{\lambda\in \Lambda}$ for which
for any $\epsilon >0$ there are at most finitely
many coordinates $x_\lambda$ with $|x_\lambda|\geq \epsilon$.
It is plain that the vector spaces satisfying
$$
\ell_1(\Lambda) \subseteq \ell_p(\Lambda) \subseteq c_0(\Lambda)\subseteq \ell_\infty(\Lambda),
\quad\text{whenever $1< p< \infty$}.
$$
When $\Lambda$ is a finite set, all above spaces coincide; otherwise, all inclusions are proper.
We write $\ell_p$ and $c_0$ for $\ell_p(\mathbb{N})$ and $c_0(\mathbb{N})$ as usual.

As in the finite dimensional case, the $\ell_p$--norm is strictly convex when $1<p<\infty$.
Two nonzero
$\bx, \by$ in $\ell_p(\Lambda)$ are parallel (resp.\ TEA), exactly when there is a scalar $t$ (resp.\ $t>0$) such
that $\bx=t\by$.  Therefore,
any linear map of $\ell_p(\Lambda)$ preserves parallel pairs and TEA pairs when $1<p<\infty$.

We study the cases when $p=1$ and $p=\infty$, and $\Lambda$ is an \emph{infinite} index set below.
As in Section \ref{s:finite-dim}, we start with the following observations.
Note that $\ell_\infty(\Lambda)$ is isometrically isomorphic to
the Banach space $C(\beta\Lambda)$ of continuous functions on the Stone-Cech compactification
$\beta\Lambda$ of $\Lambda$, which consists of all ultrafilters of the discrete space $\Lambda$.

\begin{lemma}\label{lem:inf-seqn-para}
\begin{enumerate}[{\rm (a)}]
  \item  $\bx=(x_\lambda)_{\lambda\in \Lambda}, \by=(y_\lambda)_{\lambda\in \Lambda}$ in $\ell_1(\Lambda)$ are parallel (resp.\ TEA)  with respect to the $\ell_1$--norm if and only if
  there is a unimodular scalar $\mu$ (resp.\ $\mu=1$) such that $\mu \overline{x_\lambda}y_\lambda\geq 0$ for all $\lambda\in \Lambda$.

  \item  $\bx=(x_\lambda)_{\lambda\in \Lambda}, \by=(y_\lambda)_{\lambda\in \Lambda}$  in $c_0(\Lambda)$
    are parallel (resp.\ TEA) with respect to the $\ell_\infty$--norm if and only if there is an index  $\lambda$ in $\Lambda$ such that
       $|\overline{x_\lambda}y_\lambda| = \|\bx\|_\infty \|\by\|_\infty$
(resp.\   $\overline{x_\lambda}y_\lambda = \|\bx\|_\infty \|\by\|_\infty$).

  \item  $\bx=(x_\lambda)_{\lambda\in \Lambda}, \by=(y_\lambda)_{\lambda\in \Lambda}$  in $\ell_\infty(\Lambda)$
    are parallel (resp.\ TEA) with respect to the $\ell_\infty$--norm if and only if there is an ultrafilter $\fU$ on $\Lambda$ such that
       $\lim_{\fU} |\overline{x_\lambda}y_\lambda| = \|\bx\|_\infty \|\by\|_\infty$
(resp.\   $\lim_{\fU} \overline{x_\lambda}y_\lambda = \|\bx\|_\infty \|\by\|_\infty$).
  \end{enumerate}
\end{lemma}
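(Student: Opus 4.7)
The plan is to reduce all three parts to the elementary fact that, in $\IF$, the pointwise equality $|a+\mu b|=|a|+|b|$ holds precisely when $\mu\bar a b\geq 0$ (with $\mu=1$ giving TEA), and then to identify where the global triangle equality localizes in each norm. For part~(a), unfolding
\[
\|\bx+\mu\by\|_1=\sum_{\lambda}|x_\lambda+\mu y_\lambda|\leq\sum_{\lambda}(|x_\lambda|+|y_\lambda|)=\|\bx\|_1+\|\by\|_1
\]
shows that (finite) global equality is equivalent to coordinatewise equality for every $\lambda$, i.e.\ $\mu\overline{x_\lambda}y_\lambda\geq 0$. Specializing to $\mu=1$ handles TEA; both converses are immediate.

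For part~(b), the crucial ingredient is that every nonzero vector in $c_0(\Lambda)$ attains its $\ell_\infty$-norm at some index. If $\bx,\by$ are parallel via unimodular $\mu$, then $\bx+\mu\by\in c_0(\Lambda)$ attains its norm at some $\lambda^*$, and the chain
\[
\|\bx\|_\infty+\|\by\|_\infty=|x_{\lambda^*}+\mu y_{\lambda^*}|\leq |x_{\lambda^*}|+|y_{\lambda^*}|\leq\|\bx\|_\infty+\|\by\|_\infty
\]
collapses to equalities, forcing $|x_{\lambda^*}|=\|\bx\|_\infty$, $|y_{\lambda^*}|=\|\by\|_\infty$, and $\mu\overline{x_{\lambda^*}}y_{\lambda^*}\geq 0$; hence $|\overline{x_{\lambda^*}}y_{\lambda^*}|=\|\bx\|_\infty\|\by\|_\infty$, with the TEA case ($\mu=1$) dropping the outer absolute value. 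Conversely, given such a $\lambda$, choose the unique unimodular $\mu$ making $\mu\overline{x_\lambda}y_\lambda\geq 0$ and reverse the chain.

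Part~(c) introduces the only genuine difficulty: vectors in $\ell_\infty(\Lambda)$ need not attain their sup-norm at any coordinate. I would use the isometric Gelfand identification $\ell_\infty(\Lambda)\cong C(\beta\Lambda)$ with $\hat\bx(\fU)=\lim_\fU x_\lambda$, so that $\|\bx\|_\infty=\max_{\fU\in\beta\Lambda}|\hat\bx(\fU)|$ is attained by compactness. The argument of part~(b) then carries over with $\lambda^*$ replaced by an ultrafilter $\fU^*\in\beta\Lambda$ where $|\hat\bx+\mu\hat\by|$ is maximal. The main technical point is verifying that multiplication and complex conjugation commute with ultrafilter limits on bounded families, so that $|\overline{\hat\bx(\fU^*)}\hat\by(\fU^*)|=\|\bx\|_\infty\|\by\|_\infty$ translates verbatim to $\lim_{\fU^*}|\overline{x_\lambda}y_\lambda|=\|\bx\|_\infty\|\by\|_\infty$. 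This follows from the continuity of these operations on bounded sets, equivalently from the multiplicative and $*$-preserving nature of the Gelfand transform on the commutative $C^*$-algebra $\ell_\infty(\Lambda)$.
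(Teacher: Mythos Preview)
Your argument is correct in all three parts. The paper itself does not supply a proof of this lemma: it is stated as an observation, parallel to the finite-dimensional Lemma~\ref{lem:seqn-para}, with the identification $\ell_\infty(\Lambda)\cong C(\beta\Lambda)$ recorded just beforehand as the intended tool for part~(c). Your proof fills in exactly what the paper leaves implicit, and your use of the Gelfand transform for~(c) is precisely the route the authors signal. One cosmetic point: in the converse of~(b) the unimodular $\mu$ is only unique when $\overline{x_\lambda}y_\lambda\neq 0$; in the degenerate case where one vector is zero the statement is trivial anyway, so this does not affect the argument.
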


For each $\alpha\in \Lambda$, let $\be_\alpha= (e_{\alpha, \lambda})_{\lambda\in \Lambda}$
be the $\alpha$--coordinate vector with coordinates $e_{\alpha, \lambda}=1$ when $\alpha=\lambda$ and $0$ elsewhere.
We can identify any bounded linear map $T$ of $\ell_1(\Lambda)$ or $c_0(\Lambda)$ as the infinite ``matrix'' $(t_{\alpha\beta})$ with
$t_{\alpha\beta} = \be_\alpha^\TT(T\be_\beta)$, where $\be_\alpha^\TT$ denotes the linear functional
$(x_\lambda)_{\lambda\in \Lambda}\mapsto x_\alpha$ for any $\alpha\in \Lambda$.
However, there are unbounded linear maps   such that their representation ``matrices'' are zero.
For example, consider any unbounded linear functional $f$ of $c_0$ vanishing on the subspace of all finite sequences, that is,
$f(\be_n)=0$ for all $n=1,2,\ldots$.  Then the unbounded linear map $\bx\mapsto f(\bx)\be_1$ has zero ``matrix''.

On the other hand, the Banach dual space of $\ell_1(\Lambda)$ is
$\ell_\infty(\Lambda)$ when one identify $\bu=(u_\lambda)_{\lambda\in\Lambda}\in \ell_\infty(\Lambda)$
with the bounded linear functional $\bu^\TT= \sum_{\lambda\in\Lambda} u_\lambda\be_\lambda^\TT$ (converging in the weak* topology
$\sigma(\ell_\infty(\Lambda),\ell_1(\Lambda)$).
In this case, a nonzero bounded linear operator $S$ of $\ell_\infty(\Lambda)$ can have
zero representation ``matrix''.  For example, let $g$ be any nonzero bounded linear functional of $\ell_\infty$ vanishing on the
essential null sequence space $c_0$, and $S\bx = g(\bx)\be_1$.  However, a
$\sigma(\ell_\infty(\Lambda),\ell_1(\Lambda))$--$\sigma(\ell_\infty(\Lambda),\ell_1(\Lambda))$ continuous linear map is determined by
its representation ``matrix''.

Using the terminology in the finite dimensional case, we call a ``matrix'' $U= (u_{\alpha\beta})$
 a ``\emph{monomial matrix}'' if for
each $\alpha\in \Lambda$ there is exactly one $\beta\in \Lambda$ such that $u_{\alpha\beta}\neq 0$.
A ``monomial matrix'' $U$ is a ``\emph{generalized permutation matrix}'' if all its nonzero entries $|u_{\alpha\beta}|=1$, and it is a
``\emph{diagonal unitary matrix}''  if $|u_{\alpha\alpha}|=1$ for
each $\alpha\in \Lambda$.
We also assume that the linear map $U$ is bounded or
$\sigma(\ell_\infty(\Lambda),\ell_1(\Lambda))$--$\sigma(\ell_\infty(\Lambda),\ell_1(\Lambda))$ continuous,
depending on the context, so that
the representation ``\emph{monomial matrix}'' $(u_{\alpha\beta})$ determines $U$.
It is   clear that a  linear map $T$ of $\ell_1(\Lambda)$ (resp.\ $c_0(\Lambda)$ or $\ell_\infty(\Lambda)$)
 preserves parallel pairs or TEA pairs if, and only if,
$\gamma UTV$ does whenever $\gamma>0$, and $U,V$ are some invertible
``monomial matrices'' (resp.\ ``generalized permutation matrices'').

\begin{theorem}
\label{thm:inf-1-preservers}
Let $T=(t_{\alpha\beta}): \ell_1(\Lambda)\to \ell_1(\Lambda)$ be a bounded linear map.
\begin{itemize}
\item[{\rm (a)}] $T$ preserves TEA pairs if and only if for each $\alpha\in \Lambda$ there is
at most one $\beta\in \Lambda$ such that  $t_{\alpha\beta}\neq 0$.
\item[{\rm (b)}] $T$ preserves parallel pairs if and only if $T$ preserves TEA,
or $T = {\bv}{\bu}^\TT$ for some   $\bu\in \ell_\infty(\Lambda)$ and $\bv  \in \ell_1(\Lambda)$.
\end{itemize}
\end{theorem}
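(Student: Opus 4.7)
The plan is to reduce Theorem \ref{thm:inf-1-preservers} to the finite-dimensional result, Theorem \ref{prop:1-preservers}, by way of coordinate restrictions. The key observation, coming from the pointwise form of Lemma \ref{lem:inf-seqn-para}(a), is that both parallel and TEA pairs in $\ell_1(\Lambda)$ are characterised by a sign condition (with a single unimodular $\mu$) that persists under restriction to any subset $F\subseteq \Lambda$, and that two vectors supported in $F$ are parallel (resp.\ TEA) in $\ell_1(F)$ if and only if they are so in $\ell_1(\Lambda)$. Hence, for each finite $F\subseteq \Lambda$, the truncation
$$
T_F := P_F\, T\big|_{\ell_1(F)}\in \bM_{|F|},
$$
where $P_F:\ell_1(\Lambda)\to\ell_1(F)$ is coordinate restriction, inherits the preserver property from $T$, so Theorem \ref{prop:1-preservers} applies to it.

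Sufficiency in both (a) and (b) will come directly from Lemma \ref{lem:inf-seqn-para}(a): if each row of $T$ has at most one nonzero entry $t_{\alpha,\beta(\alpha)}$, then $\overline{(T\bx)_\alpha}(T\by)_\alpha = |t_{\alpha,\beta(\alpha)}|^2\, \overline{x_{\beta(\alpha)}} y_{\beta(\alpha)}$ preserves the needed sign whenever $\bx,\by$ are TEA (or parallel, after absorbing the common $\mu$); and if $T=\bv\bu^\TT$ then $T\bx,T\by$ are both scalar multiples of $\bv$, hence automatically parallel. For necessity in (a), Theorem \ref{prop:1-preservers}(a) applied to every $T_F$ gives that each row of every $T_F$ has at most one nonzero entry; since each row of $T$ appears inside $T_F$ for $F$ sufficiently large, this transfers to $T$.

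For necessity in (b), assume that $T$ preserves parallel pairs but not TEA pairs. By part (a) there exists a row $\alpha_0$ with two nonzero entries, say at positions $\beta_1,\beta_2$. Then for every finite $F\supseteq\{\alpha_0,\beta_1,\beta_2\}$, $T_F$ has a row with two nonzero entries, so Theorem \ref{prop:1-preservers}(b) forces $T_F$ to be of rank one; in particular, every row of $T_F$ is a scalar multiple of its $\alpha_0$-row. Normalising at the entry $t_{\alpha_0,\beta_1}$ yields
$$
t_{\alpha,\beta}=\frac{t_{\alpha,\beta_1}}{t_{\alpha_0,\beta_1}}\, t_{\alpha_0,\beta}\qquad\text{for all }\alpha,\beta\in F,
$$
and exhausting $\Lambda$ by such $F$ shows the identity holds for all $\alpha,\beta\in\Lambda$. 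Setting $\bv:=T\be_{\beta_1}/t_{\alpha_0,\beta_1}$ and letting $\bu^\TT$ be the $\alpha_0$-row of $T$ rescaled by $1/t_{\alpha_0,\beta_1}$, we obtain $T=\bv\bu^\TT$; boundedness of $T$ guarantees that $\bv\in\ell_1(\Lambda)$ (as a column of $T$) and that $\bu^\TT$ is a bounded linear functional on $\ell_1(\Lambda)$, so $\bu\in\ell_\infty(\Lambda)$.

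The main technical point is ensuring that the local rank-one reductions of the matrices $T_F$ glue coherently into a single global rank-one factorisation of $T$. Fixing $\beta_1$ as a common reference column handles this cleanly: the scalars $c_\alpha:=t_{\alpha,\beta_1}/t_{\alpha_0,\beta_1}$ are $F$-independent and assemble the rows of $T$ into the product $\bv\bu^\TT$ without any ambiguity. The boundedness hypothesis is essential here; it rules out the pathological unbounded examples flagged between the statements and places $\bv,\bu$ in the correct sequence spaces.
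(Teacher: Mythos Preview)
Your argument is correct and close in spirit to the paper's, but packaged differently. For part (a), both proofs handle sufficiency by the direct sign computation from Lemma \ref{lem:inf-seqn-para}(a); for necessity, the paper simply says the converse ``follows from exactly the same arguments'' as in Theorem \ref{prop:1-preservers}(a), whereas you formalise this by passing to the finite truncations $T_F=P_FT|_{\ell_1(F)}$ and invoking Theorem \ref{prop:1-preservers} as a black box. For part (b), the paper argues directly on $T$: assuming $T$ preserves parallel pairs, has a row with two nonzero entries, and has range of dimension at least two, it finds two linearly independent rows and extracts a $2\times 2$ or $2\times 3$ submatrix on which the contradiction of Case~1 or Case~2 from the proof of Theorem \ref{prop:1-preservers}(b) fires; this yields $\operatorname{rank} T\le 1$ and hence the form $\bv\bu^\TT$. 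You instead show that every $T_F$ (with $F\supseteq\{\alpha_0,\beta_1,\beta_2\}$) is rank one by the finite-dimensional theorem, and then glue the local rank-one factorisations via the fixed pivot entry $t_{\alpha_0,\beta_1}$. Your route is a bit more systematic in that it never re-enters the case analysis of Theorem \ref{prop:1-preservers}, at the modest cost of the extra gluing step; the paper's route is shorter but relies on replaying those cases.

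One trivial slip: with your stated choices $\bv=T\be_{\beta_1}/t_{\alpha_0,\beta_1}$ and $\bu^\TT$ equal to the $\alpha_0$-row of $T$ \emph{also} divided by $t_{\alpha_0,\beta_1}$, you obtain $\bv\bu^\TT=T/t_{\alpha_0,\beta_1}$ rather than $T$; simply drop the rescaling on $\bu$ (take $u_\beta=t_{\alpha_0,\beta}$) to match your displayed identity $t_{\alpha,\beta}=\dfrac{t_{\alpha,\beta_1}}{t_{\alpha_0,\beta_1}}\,t_{\alpha_0,\beta}$.
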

\begin{proof}
(a) Suppose for each $\alpha\in \Lambda$ there is
at most one $\beta\in \Lambda$ such that $t_{\alpha\beta}\neq 0$.  Write such $\beta=\alpha'$ in this case.
Let  $\bx=(x_\lambda)_{\lambda\in \Lambda}, \by=(y_\lambda)_{\lambda\in \Lambda}$ be a TEA pair in $\ell_1(\Lambda)$.
By Lemma \ref{lem:inf-seqn-para},
 $\overline{x_\lambda}y_\lambda\geq 0$ for all $\lambda\in \Lambda$.
For each $\lambda\in \Lambda$, the $\lambda$-coordinate of  $T\bx$ and $T\by$ are $t_{\lambda\lambda'}x_{\lambda'}$
and $t_{\lambda\lambda'}y_{\lambda'}$, respectively.  Since
$\overline{t_{\lambda\lambda'}x_{\lambda'}}t_{\lambda\lambda'}y_{\lambda'}=
\overline{x_{\lambda'}}y_{\lambda'} |t_{\lambda\lambda'}|^2\geq 0$ for all $\lambda\in \Lambda$, we see that $T\bx, T\by$
form a TEA pair.
 The converse  follows from exactly the same arguments for the finite dimensional case given in the proof of
 Theorem  \ref{prop:1-preservers}(a).

 (b) It suffices to verify the necessity for the case when $T$ does not preserve TEA.
 Suppose $T$ preservers parallel pairs, and the ${\alpha_1}$-row
 $(t_{{\alpha_1}\beta})_{\beta\in\Lambda}$ of its  matrix   representation
 has more than one nonzero entries.
 Suppose the range of $T$ has dimension at least two, and thus there is another ${\alpha_2}$--row
  $(t_{{\alpha_2}\beta})_{\beta\in\Lambda}$ of $T$ linearly independent from
 the ${\alpha_1}$-row.  We are going to derive a contradiction.

 Since the ${\alpha_1}$--row and the ${\alpha_2}$--row of $T$ are linearly independent, there are distinct indices $\beta_1, \beta_2$
 such that the $2\times 2$ matrix
$$
\begin{pmatrix}
  t_{{\alpha_1}\beta_1} & t_{{\alpha_1}\beta_2} \\
  t_{{\alpha_2}\beta_1} & t_{{\alpha_2}\beta_2}
\end{pmatrix}
$$
is invertible.
If both $t_{{\alpha_1}\beta_1}, t_{{\alpha_2}\beta_1}$, or both $t_{{\alpha_1}\beta_2}, t_{{\alpha_2}\beta_2}$, are nonzero, then by
replacing $T$ with $PTQ$ for some suitable invertible ``monomial matrix'' $P, Q$, we can assume that the above matrix
assumes the form
$$
\begin{pmatrix}
  1 & 1 \\
  1 & a
\end{pmatrix}
$$
for some scalar $a\neq 1$.
Then the argument in Case 1 of the proof of
 Theorem  \ref{prop:1-preservers}(b) derives a desired contradiction.
 If $t_{{\alpha_1}\beta_2}=t_{{\alpha_2}\beta_1}=0$, say, then we
 search for an other index $\beta_3$ with
$t_{{\alpha_1}\beta_3}\neq 0$, and  such $\beta_3$ exists by assumption.
If $t_{{\alpha_2}\beta_3}\neq 0$, then it comes back to the first case above, and we are done.
Suppose $t_{{\alpha_2}\beta_3}= 0$.  By replacing $T$ with $P'TQ'$ for some suitable invertible ``monomial matrices'' $P', Q'$, we may assume that $T$ has a ``submatrix'' of the form
$$
\begin{pmatrix}
  1 & 1 & 0 \\
  0 & 0 & a
\end{pmatrix}
$$
with $a\neq0$ for the indices ${\alpha_1}, {\alpha_2}$, and $\beta_1, \beta_2, \beta_3$.
Then the argument in Case 2 of the proof of Theorem  \ref{prop:1-preservers}(b) provides us a contradiction, as well.
\end{proof}


\begin{lemma} \label{lem:inf-par-1dim-F}
Let  $\Lambda$ be an infinite index set.
\begin{enumerate}[{\rm (a)}]
\item
If $\bV$ is a subspace of $\ell_\infty(\Lambda)$ such that any two elements in $\bV$ are parallel with respect to
$\|\cdot\|_\infty$, then $\dim \bV \le 1$.

\item
Let $T: \ell_\infty(\Lambda)\to \ell_\infty(\Lambda)$ or $T: c_0(\Lambda)\to c_0(\Lambda)$
be a nonzero linear map. Then $T$ is injective if one of the following holds.
\begin{enumerate}[{\rm (i)}]
\item    $T$ preserves TEA pairs.
\item    $T$ preserves parallel pairs with range space of dimension larger than one.
\end{enumerate}
\end{enumerate}
\end{lemma}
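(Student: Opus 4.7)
For part (a), my plan is to replace the finite-coordinate analysis of Lemma~\ref{lem:par-1dim-F} by a single ultrafilter computation combined with the parallelogram identity. For part (b), the strategy mirrors the proof of Lemma~\ref{lemma:tea-para-not-inv-F}: assuming $T\bv=0$ for a nonzero $\bv$, I would reduce $\bv$ to a nonnegative vector of $\ell_\infty$-norm one by right-multiplying $T$ by a suitable ``generalized permutation matrix'', and then deduce that $T$ annihilates every coordinate vector $\be_\lambda$ (for case (i)) or that $T\bx$ and $T\by$ are parallel for every $\bx,\by$ (for case (ii)). In (ii) part (a) then forces the range of $T$ to have dimension at most one, contradicting the hypothesis.

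\textbf{Sketch of (a).} Let $\bu,\bv\in\bV$ be nonzero; the goal is to show they are linearly dependent. After scaling we may assume $\|\bu\|_\infty=\|\bv\|_\infty=1$, and parallelism of $\bu,\bv$ yields a unimodular $\alpha$ with $\|\bu+\alpha\bv\|_\infty=2$; replacing $\bv$ by $\alpha\bv$, assume $\|\bu+\bv\|_\infty=2$. Then $\bu+\bv$ and $\bu-\bv$ also lie in $\bV$ and are parallel, so by Lemma~\ref{lem:inf-seqn-para}(c) there exists an ultrafilter $\fU$ on $\Lambda$ with
\[
\lim_\fU |u_\lambda+v_\lambda|\,|u_\lambda-v_\lambda|=\|\bu+\bv\|_\infty\,\|\bu-\bv\|_\infty=2\|\bu-\bv\|_\infty.
\]
Since $|u_\lambda+v_\lambda|\le 2$ and $|u_\lambda-v_\lambda|\le\|\bu-\bv\|_\infty$ for every $\lambda$, if $\|\bu-\bv\|_\infty>0$ the product limit forces $\lim_\fU|u_\lambda+v_\lambda|=2$. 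But the parallelogram identity $|u_\lambda+v_\lambda|^2+|u_\lambda-v_\lambda|^2=2|u_\lambda|^2+2|v_\lambda|^2\le 4$ then gives $\lim_\fU|u_\lambda-v_\lambda|=0$, contradicting $\lim_\fU|u_\lambda-v_\lambda|=\|\bu-\bv\|_\infty>0$. Hence $\bu=\bv$ and $\dim\bV\le 1$.

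\textbf{Sketch of (b).} Assume $T\bv=0$ for some $\bv\neq 0$. By right-multiplying $T$ with a diagonal unitary ``matrix'' we may take $\bv$ to have nonnegative coordinates with $\|\bv\|_\infty=1$. For case~(i) in $c_0(\Lambda)$, the supremum is attained on a nonempty finite set $S=\{\lambda:v_\lambda=1\}$, which is proper since $\Lambda$ is infinite. For each $\beta\notin S$ and each $\xi>1/(1-v_\beta)$, the vectors $\xi\bv+\be_\beta$ and $\xi\bv-\be_\beta$ both have $\ell_\infty$-norm equal to $\xi$, attained at every $\lambda\in S$ where their coordinates coincide with $\xi$; by Lemma~\ref{lem:inf-seqn-para}(b) they form a TEA pair. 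Applying $T$ then shows $T\be_\beta$ and $-T\be_\beta$ are TEA, forcing $T\be_\beta=0$. Picking any $\beta_0\notin S$ and iterating the argument with $\be_{\beta_0}$ in place of $\bv$ (now $S'=\{\beta_0\}$) yields $T\be_\alpha=0$ for every $\alpha\in\Lambda$, and continuity of $T$ on $c_0(\Lambda)$ then forces $T=0$, contradicting $T\neq 0$. The $\ell_\infty(\Lambda)$ version of (i) splits into two extra subcases: when $\bv$ is a constant-one vector, reuse the $n\ge 3$ argument from Lemma~\ref{lemma:tea-para-not-inv-F}, which transfers verbatim since $|\Lambda|\ge 3$; when $\|\bv\|_\infty$ is not attained, replace the set $S$ by an ultrafilter $\fU$ with $\lim_\fU v_\lambda=1$ and pass to $\fU$-limits in the characterisation of Lemma~\ref{lem:inf-seqn-para}(c). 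For case~(ii), I would reproduce the three-case analysis of Lemma~\ref{lemma:tea-para-not-inv-F}(b) almost verbatim: given $\bx,\by$, add suitable scalar multiples of $\bv$ to bring them into the parallel configurations of Cases~1, 2, 3, thereby showing that $T\bx$ and $T\by$ are parallel; part (a) then forces the range of $T$ to be at most one-dimensional, contradicting the hypothesis.

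\textbf{Main obstacle.} The hardest step will be the subcase of (b) in $\ell_\infty(\Lambda)$ where $\|\bv\|_\infty$ is not attained: the finite-dimensional arguments exploit specific coordinates at which the supremum is realised, and these must be systematically rephrased via ultrafilter limits, with care taken that approximate TEA/parallel relations along $\fU$ lift to exact TEA/parallel relations of the vectors we construct ($\xi\bv\pm\be_\beta$ and their analogues). A secondary subtlety is verifying that the reductions by ``generalized permutation matrices'' behave correctly on an uncountable index set, and that passing from $T\be_\lambda=0$ (for all $\lambda$) to $T\equiv 0$ is valid in $\ell_\infty(\Lambda)$, where the finitely supported vectors are not norm-dense; here boundedness or weak*-continuity of $T$ enters.
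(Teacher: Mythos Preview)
Your part (a) is correct and, in fact, cleaner than the paper's treatment: the paper simply defers to the finite-dimensional Lemma~\ref{lem:par-1dim-F}, whose proof relies on the $\ell_\infty$-norm being attained at a coordinate, whereas your ultrafilter-plus-parallelogram argument handles the general $\ell_\infty(\Lambda)$ case uniformly.

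There is, however, a genuine gap in your part (b)(i). You show $T\be_\alpha=0$ for every $\alpha\in\Lambda$ and then invoke ``continuity of $T$'' to conclude $T=0$. But the lemma does \emph{not} assume $T$ is bounded (and in $\ell_\infty(\Lambda)$ the finitely supported vectors are not dense even when $T$ is bounded), so this step fails as written; you flag this yourself as an obstacle but do not resolve it. The paper circumvents the issue entirely: once a single index $\lambda'$ with $|v_{\lambda'}|<1$ is found and $T\be_{\lambda'}=0$ is established, one observes that for \emph{any} $\bu$ with zero $\lambda'$-coordinate the vectors $\xi\be_{\lambda'}+\bu$ and $\xi\be_{\lambda'}-\bu$ form a TEA pair for large $\xi$, whence $T\bu$ and $-T\bu$ are TEA and $T\bu=0$. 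Since every $\bx$ decomposes as $x_{\lambda'}\be_{\lambda'}+(\bx-x_{\lambda'}\be_{\lambda'})$, this gives $T=0$ on the whole space with no continuity hypothesis.

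This observation also dissolves what you call your ``main obstacle'': the subcase where $\|\bv\|_\infty$ is not attained requires no ultrafilter analysis, because then \emph{every} $\lambda$ satisfies $|v_\lambda|<1$ and the argument above applies immediately with any choice of $\lambda'$. The only genuinely separate subcase is $|v_\lambda|=1$ for all $\lambda$, which after a diagonal unitary becomes $\bv=\be_\Lambda$; here the paper uses the TEA pair $\be_\Lambda-\be_{\lambda_2}$, $\be_{\lambda_1}+\be_{\lambda_2}$ to produce a new kernel vector $\be_\Lambda-\be_{\lambda_1}-\be_{\lambda_2}$ that \emph{does} have coordinates strictly below $1$ in modulus, reducing to the previous case. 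Your plan for (b)(ii) is essentially the paper's, though the paper organises it via the dichotomy $\Lambda'=\Lambda$ versus $\Lambda''\ne\emptyset$ rather than literally replaying all three cases of Lemma~\ref{lemma:tea-para-not-inv-F}.
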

\begin{proof}
  (a) It follows from  the proof of Lemma \ref{lem:par-1dim-F}.

  (b) We discuss only the case $T$ is a linear map of $\ell_\infty(\Lambda)$, since the other case is similar.

   (i) Let $T$ be a  linear TEA preserver of $\ell_\infty(\Lambda)$ such that $T(\bv)=0$ for
  some  $\bv =(v_\lambda)_{\lambda\in\Lambda}$ with $\|\bv\|_\infty =\sup_{\lambda\in \Lambda} |v_\lambda|=1$.
  We will show that   $T=0$.

Suppose $\Lambda'=\{\lambda\in \Lambda: |v_\lambda|<1\}\neq \emptyset$.
For any $\lambda'\in \Lambda'$, since $\xi \bv + \be_{\lambda'}$ and  $\xi \bv - \be_{\lambda'}$ form a TEA pair for large $\xi>0$,
so do $T(\xi \bv + \be_{\lambda'})=T(\be_{\lambda'})$ and $T(\xi \bv - \be_{\lambda'})=-T(\be_{\lambda'})$.
It forces $T(\be_{\lambda'})=0$.
For any $\bu=\sum_{\lambda\neq\lambda'} v_\lambda\be_\lambda$ with zero $\lambda'$--coordinate,
we have $\xi \be_{\lambda'} + \bu$ and $\xi \be_{\lambda'} - \bu$ form a TEA pair for large $\xi>0$,
and so do $T(\xi \be_{\lambda'} + \bu)=T(\bu)$ and $T(\xi \be_{\lambda'} - \bu)=-T(\bu)$.
It follows $T(\bu)=0$.  In general, for any $\bx = x_{\lambda'}\be_{\lambda'} + \bu$
such that $\bu=\bx-x_{\lambda'}\be_{\lambda'}$ has zero ${\lambda'}$--coordinate,
$$
T(\bx)= x_{\lambda'}T(\be_{\lambda'}) + T(\bu)=0.
$$
Hence $T=0$.
Therefore, we may assume $\Lambda'=\emptyset$.

Replacing $T$ with $TQ$ for some suitable ``generalized permutation matrix'' $Q$, we may further assume that $\bv=\be_\Lambda$, that
is all coordinates of $\bv$ is $1$.
For any distinct $\lambda_1, \lambda_2\in \Lambda$, since $\be_\Lambda - \be_{\lambda_2}$
and $\be_{\lambda_1}+ \be_{\lambda_2}$
form a TEA pair, so do $T(\be_\Lambda - \be_{\lambda_2})=-T(\be_{\lambda_2})$ and
$T(\be_{\lambda_1}+ \be_{\lambda_2})=  - T(\be_\Lambda - \be_{\lambda_1}- \be_{\lambda_2})$, since $T(\be_\Lambda)=0$.
Therefore,
  $$
  \|T(\be_{\lambda_1})\|_\infty = \|T(\be_\Lambda - \be_{\lambda_1})\|_\infty=
  \|T(\be_{\lambda_2})\|_\infty +
  \|T(\be_\Lambda - \be_{\lambda_1}- \be_{\lambda_2})\|_\infty
  $$
  for any distinct $\lambda_1, \lambda_2\in \Lambda$.
  Exchanging the roles of $\lambda_1$ and $\lambda_2$, we see that
  $$
 T(\be_\Lambda - \be_{\lambda_1}- \be_{\lambda_2})   =0.
$$
Replacing $\bv=\be_\Lambda$ with $\bv'=\be_\Lambda - \be_{\lambda_1}- \be_{\lambda_2}\neq 0$ (since $\Lambda$ has more than
two elements), and arguing as above,
we see that $\Lambda'=\{\lambda_1, \lambda_2\}\neq\emptyset$,
and then $T=0$.

(ii) Suppose $T$ preserves parallel pairs.
Assume $T$ is not injective,
and $\bv =(v_\lambda)_{\lambda\in\Lambda}$ is a norm one element such that $T\bv = 0$.
Let $\Lambda'=\{\lambda\in \Lambda: |v_\lambda|<1\}$.
For any $\bx, \by\in \ell_\infty(\Lambda)$ such that their $\lambda$--coordinates $\be_\lambda^\TT\bx=\be_\lambda^\TT\by=0$
for all $\lambda$ outside $\Lambda'$,
we see that $\xi\bv + \bx$ and $\xi\bv + \by$ are parallel for large $\xi>0$, and so are
$T(\xi\bv + \bx)=T(\bx)$ and $T(\xi\bv + \by)=T(\by)$.  It follows from part (a) that the space
$\{T(\bx): \be_{\lambda}^\TT\bx= 0 \ \text{for all $\lambda$ outside $\Lambda'$}\}$
has dimension at most one.

If $\Lambda=\Lambda'$ then we are done.
If $\Lambda''=\Lambda\setminus \Lambda'$ is nonempty, then
by replacing $T$ with $TQ$ for some ``generalized permutation matrix'' $Q$, we may assume that
 $v_\lambda=1$ for all $\lambda\in \Lambda''$.
Then the proof of Lemma \ref{lemma:tea-para-not-inv-F}(b) shows that the range of  $T$
has dimension at most one.
\end{proof}

We note that unlike the finite dimensional case, an injective TEA/parallel pair linear preserver of
$c_0(\Lambda)$ or $\ell_\infty(\Lambda)$
can be non-surjective.  For an example, consider the isometric
right shift operator $L$ of $\ell_\infty$ or $c_0$ by sending $\be_n$ to $\be_{n+1}$ for
$n=1,2,\ldots$.
However, the following result demonstrates that    TEA/parallel pair linear preservers of
$c_0(\Lambda)$ or $\ell_\infty(\Lambda)$ are automatically bounded, and indeed   scalar multiples of
  injective isometries.

\begin{theorem}\label{thm:c_0}
  Let $\Lambda$ be an infinite index set.
Let  $T: c_0(\Lambda)\rightarrow c_0(\Lambda)$ be a nonzero  linear map.
The following conditions are
equivalent to each other.
 \begin{enumerate}[{\rm (a)}]
\item    $(T(\bu), T(\bv))$   is a   TEA pair if and only if $(\bu, \bv)$ is a TEA pair, for any $\bu, \bv\in c_0(\Lambda)$.
\item    $(T(\bu), T(\bv))$   is a   parallel pair if and only if $(\bu, \bv)$ is a parallel pair, for any $\bu, \bv\in c_0(\Lambda)$.
\item  $T$ is a scalar multiple of a (not necessarily surjective) linear isometry.
\end{enumerate}
In this case, there is $\gamma > 0$, a subset $\Lambda_1$ of $\Lambda$,
a family $\{\mu_\lambda: \lambda\in \Lambda_1\}$ of unimodular scalars,
 and a surjective  map $\tau: \Lambda_1\to \Lambda$ such that
for any $\bx=(x_\lambda)_{\lambda\in \Lambda} \in c_0(\Lambda)$, the image
$\by=T(\bx)=(y_\lambda)_{\lambda\in \Lambda}$
has coordinates
\begin{align}
y_{\beta}\ &= \gamma \mu_\beta x_{\tau(\beta)}  \quad\text{for all $\beta\in \Lambda_1$}, \label{eq:Lambda_1}
\intertext{and}
|y_{\beta'}|\ &\leq \gamma \quad\text{when $\beta'\in  \Lambda\setminus \Lambda_1$.} \label{eq:Lambda_2}
\end{align}
\end{theorem}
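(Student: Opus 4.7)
The plan is to establish (c) $\Rightarrow$ (a) and (c) $\Rightarrow$ (b) directly, and then prove (a) $\Rightarrow$ (c), with (b) $\Rightarrow$ (c) running along similar lines.

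For (c) $\Rightarrow$ (a) and (c) $\Rightarrow$ (b), I would verify directly that \eqref{eq:Lambda_1}--\eqref{eq:Lambda_2} force $\|T\bx\|_\infty = \gamma\|\bx\|_\infty$ for every $\bx \in c_0(\Lambda)$. Indeed, surjectivity of $\tau$ yields, for any $\alpha_0 \in \Lambda$ with $|x_{\alpha_0}| = \|\bx\|_\infty$, a $\beta \in \tau^{-1}(\alpha_0) \subseteq \Lambda_1$ realizing $|y_\beta| = \gamma\|\bx\|_\infty$ via \eqref{eq:Lambda_1}; all other coordinates are bounded by $\gamma\|\bx\|_\infty$ from \eqref{eq:Lambda_2}. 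Thus $T$ is a scalar multiple of a (possibly non-surjective) linear isometry, and such a map evidently preserves and reflects both TEA and parallel pairs.

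For (a) $\Rightarrow$ (c), set $\bt_\alpha = T\be_\alpha$, $\gamma_\alpha = \|\bt_\alpha\|_\infty > 0$, and $M_\alpha = \{\lambda \in \Lambda : |(\bt_\alpha)_\lambda| = \gamma_\alpha\}$, which is nonempty by essential nullity. Lemma \ref{lem:inf-par-1dim-F}(b) yields the injectivity of $T$. Since $\be_\alpha, \be_{\alpha'}$ are not parallel for $\alpha \neq \alpha'$, the iff-preservation combined with Lemma \ref{lem:inf-seqn-para}(b) gives the disjointness $M_\alpha \cap M_{\alpha'} = \emptyset$. The TEA pair $\be_\alpha + \be_{\alpha'}, \be_\alpha - \be_{\alpha'}$ has image $\bt_\alpha \pm \bt_{\alpha'}$ still TEA, so $\|\bt_\alpha+\bt_{\alpha'}\|_\infty + \|\bt_\alpha-\bt_{\alpha'}\|_\infty = \|2\bt_\alpha\|_\infty = 2\gamma_\alpha$, which is also $\geq 2\gamma_{\alpha'}$ by triangle inequality; exchanging $\alpha, \alpha'$ gives $\gamma_\alpha = \gamma_{\alpha'} =: \gamma$. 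Absorbing a left diagonal-``unitary'' into $T$, I may assume $(\bt_\alpha)_\lambda = \gamma$ on $M_\alpha$. Set $\Lambda_1 = \bigcup_\alpha M_\alpha$ (a disjoint union), and define $\tau: \Lambda_1 \to \Lambda$ by $\tau(\beta) = \alpha$ when $\beta \in M_\alpha$; surjectivity of $\tau$ is immediate from $M_\alpha \neq \emptyset$.

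The crucial step, and the main obstacle, is the off-diagonal vanishing: $(\bt_\alpha)_\beta = 0$ whenever $\beta \in M_{\alpha_0}$ and $\alpha \neq \alpha_0$. Exploiting that $\be_{\alpha_0} + s\be_\alpha, \be_{\alpha_0} - s\be_\alpha$ is TEA for $|s| \leq 1$, one gets $\|\bt_{\alpha_0}+s\bt_\alpha\|_\infty + \|\bt_{\alpha_0}-s\bt_\alpha\|_\infty = 2\gamma$, and since each summand is a convex function of $s$, this forces $s \mapsto \|\bt_{\alpha_0}+s\bt_\alpha\|_\infty$ to be affine on $[-1,1]$. Reading the slope off a fixed $\beta \in M_{\alpha_0}$ shows that $(\bt_\alpha)_\beta$ is independent of the choice of $\beta \in M_{\alpha_0}$, and the parallel pair $\be_\alpha, \be_\alpha + \be_{\alpha_0}$ applied at a $\lambda \in M_\alpha$ forces the resulting constant to be a nonnegative real multiple of the phase at $\lambda$. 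In the complex case, the additional parallel pair $\be_\alpha, \be_\alpha + i\be_{\alpha_0}$ imposes a second phase constraint of $i$-multiple type, which together with the previous pins the constant to $0$. In the real case, symmetric three-index TEA combinations such as $\be_{\alpha_0} + \be_\alpha + \be_{\alpha''}$ against $\be_{\alpha_0}$, combined with the bidirectional preservation, provide the extra rigidity needed for vanishing.

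Once the vanishing is secured, for any finitely supported $\bx$ the formula $(T\bx)_\beta = \gamma\mu_\beta x_{\tau(\beta)}$ (with $\mu_\beta$ the normalization phases) on $\Lambda_1$ is immediate. Taking $\alpha_0$ a max coordinate of $\bx$ and applying preservation to the parallel pair $(\bx, \be_{\alpha_0})$ gives $\|T\bx\|_\infty = \gamma\|\bx\|_\infty$, whence \eqref{eq:Lambda_2} on $\Lambda \setminus \Lambda_1$. The density of finitely supported vectors in $c_0(\Lambda)$, together with the boundedness so obtained, extends the formula to all $\bx \in c_0(\Lambda)$. The implication (b) $\Rightarrow$ (c) follows by the same template, using that every TEA pair is parallel (with $\mu = 1$) to leverage the same constraints.
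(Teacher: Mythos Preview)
Your approach has several genuine gaps that prevent it from going through as written.

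\textbf{The off-diagonal vanishing is the crux, and it is underspecified.} Your affine argument is correct and elegant: from $f(s)+f(-s)=2\gamma$ with $f$ convex you do get $f$ affine, and reading the $\beta$-coordinate for $\beta\in M_{\alpha_0}$ does force $(\bt_\alpha)_\beta$ to equal a fixed real constant $c$ (independent of $\beta$). But your argument that $c=0$ is hand-waving. In the complex case the pair $\be_\alpha,\be_\alpha+i\be_{\alpha_0}$ alone does not pin the phase as you describe; one needs an additional norm identity such as $\|\bt_{\alpha_0}+i\bt_\alpha\|_\infty+\|\bt_{\alpha_0}-i\bt_\alpha\|_\infty=2\gamma$ together with $\|\bt_{\alpha_0}\pm i\bt_\alpha\|_\infty\ge\sqrt{\gamma^2+c^2}$ to squeeze out $c=0$. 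In the real case your ``three-index combinations provide rigidity'' is not a proof; one actually needs several carefully chosen sign patterns (or a fourth index) to force $c\le 0$ and $c\ge 0$ separately. The paper sidesteps all of this by invoking the $3\times 3$ matrix Lemma~\ref{A-matrix}: once your affine argument shows $(\bt_{\alpha})_\beta$ is constant on each $M_{\alpha_0}$, the projected $3\times 3$ matrix has dominant diagonal \emph{and} satisfies hypothesis~(a) of that lemma (because $\Pk(T\bx)$ meets $M_{\alpha_r}$ and misses $M_{\alpha_s}$ for $s\ne r$), hence equals $\gamma I_3$.

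\textbf{The implication (b) $\Rightarrow$ (c) is not ``the same template''.} Your affine argument rests on the TEA identity $\|\bt_{\alpha_0}+s\bt_\alpha\|_\infty+\|\bt_{\alpha_0}-s\bt_\alpha\|_\infty=2\gamma$, which comes from forward TEA preservation. Under (b) you only know the images are parallel, i.e.\ $\|\bt_{\alpha_0}+s\bt_\alpha+\mu(\bt_{\alpha_0}-s\bt_\alpha)\|_\infty=\|\bt_{\alpha_0}+s\bt_\alpha\|_\infty+\|\bt_{\alpha_0}-s\bt_\alpha\|_\infty$ for \emph{some} unimodular $\mu=\mu(s)$, which does not give the affine structure. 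The paper proves the harder implication (b) $\Rightarrow$ (c) directly via Lemma~\ref{A-matrix}, and then (a) $\Rightarrow$ (c) comes for free since (a) $\Rightarrow$ (b). Your route would need a separate argument here.

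\textbf{The density extension is a genuine error.} The map $T$ is not assumed bounded. Showing $\|T\bx\|_\infty=\gamma\|\bx\|_\infty$ on finitely supported $\bx$ does not let you pass to limits: $T$ need not agree with the (unique) bounded extension of its restriction. The paper avoids this by arguing directly for an arbitrary norm-one $\bx\in c_0(\Lambda)$: pick $\alpha_2,\alpha_3$ with $|x_{\alpha_2}|,|x_{\alpha_3}|$ small, use the already-established structure of $T\be_\alpha$ to control $\|T\bx\|_\infty-\|T(\bx-x_{\alpha_2}\be_{\alpha_2}-x_{\alpha_3}\be_{\alpha_3})\|_\infty$, and let the small coordinates tend to zero.
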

\begin{proof}
The implications (c) $\implies $ (a) $\implies$ (b) are plain.
We are verifying (b) $\implies$ (c).
Note that $T$ is injective.  Indeed, if $T(\bx)=0$ for some
nonzero $\bx\in c_0(\Lambda)$, then the fact $T(\bx)$ and $T(\be_\lambda)$ are parallel would imply that
$\bx$ and $\be_\lambda$ are parallel for every $\lambda\in \Lambda$.  But this contradicts to the fact that
$\Lambda$ is infinite and $\bx$ is essentially null.  In particular,   for
any nonzero $\bx=(x_\lambda)_{\lambda\in \Lambda} \in c_0(\Lambda)$,
   its   peak set
  $$
  \Pk(\bx) = \{\lambda\in \Lambda: |x_\lambda|=\|\bx\|_\infty\}.
  $$
is a nonempty proper subset of $\Lambda$.

   For any  $\alpha, \beta \in \Lambda$, we claim that
   $$
   \Pk(T(\be_\alpha))\cap  \Pk(T(\be_\beta)) =\emptyset\quad\text{whenever $\alpha\neq \beta$.}
   $$
In fact, if $\lambda\in \Pk(T(\be_\alpha))\cap  \Pk(T(\be_\beta))$ then both
$T(\be_\alpha)$ and $T(\be_\beta)$ attain their norms at the $\lambda$-coordinate, and thus they are parallel.
This forces $\be_\alpha$ and $\be_\beta$ are parallel, a contradiction.

Consider the disjoint union
$$
\Lambda_1 = \bigcup_{\lambda\in \Lambda} \Pk(T(\be_\lambda)).
$$
We define a surjective map $\tau: \Lambda_1\to \Lambda$ such that
$$
\tau(\beta)= \alpha\quad\text{if and only if}\quad \beta \in \Pk(T(\be_\alpha)).
$$
There is a unimodular scalar  $\mu_\lambda$ such that the norm attaining $\lambda$--coordinate of
$\overline{\mu_\lambda}T(\be_{\tau(\lambda)})$ is positive for every $\lambda$ in $\Lambda_1$.
Replacing $T$ with $Q T$ for a suitable ``diagonal unitary matrix'' $Q$, we can assume that all $\mu_\lambda=1$.

Let $\alpha_1=\tau(\beta_1), \alpha_2=\tau(\beta_2)$ and  $\alpha_3=\tau(\beta_3)$ be three distinct indices for
$\beta_1, \beta_2$ and $\beta_3$ in $\Lambda_1$.
 Consider the linear map $L: \Span(\be_{\alpha_1}, \be_{\alpha_2}, \be_{\alpha_3})\to \Span(\be_{\beta_1}, \be_{\beta_2}, \be_{\beta_3})$
 defined by taking only the $\beta_1$--, $\beta_2$-- and $\beta_3$--coordinates of
 $T(\bx)$ when $\bx\in \Span(\be_{\alpha_1}, \be_{\alpha_2}, \be_{\alpha_3})$.  We can identify $L$ with a $3\times 3$ matrix $A$
 satisfying
 the assumption in Lemma \ref{A-matrix}, from which  we have a positive $\gamma$ such that
 $L(\be_{\alpha_j})=\gamma \be_{\beta_j}$ for $j=1,2,3$.

The above argument shows that
  for any    $\alpha$ in $\Lambda$,   the $\beta$-coordinate of $T(\be_{\alpha})$ is a fixed nonzero scalar $\gamma$
  whenever $\tau(\beta)=\alpha$,
  and all the other $\beta'$-coordinates with $\beta'\in \Lambda_1$ are zero.  In other words,
\begin{align}\label{eq:Tbe}
 T(\be_{\alpha})=\gamma \sum_{\tau(\beta)=\alpha} \be_{\beta} + \bt_\alpha \quad\text{for every $\alpha\in \Lambda$,}
\end{align}
where $\bt_\alpha\in c_0(\Lambda\setminus \Lambda_1)$,
that is, all $\lambda$-coordinate of $\bt_\alpha$ with $\lambda\in \Lambda_1$ are zero.
Since the peak set $\Pk(T(\be_\alpha))\subseteq \Lambda_1$, we have $\|\bt_\alpha\|_\infty < \gamma$.
Note that the above sum must be finite, as $T(\be_\alpha)$ is essentially null.
Replacing $T$ by $T/\gamma$, we can assume that $\gamma=1$, and thus all $\|T(\be_{\alpha})\|_\infty=1$.

We claim that $\|T(\bx)\|_\infty \leq 1$ whenever  $\bx=(x_\lambda)_{\lambda\in \Lambda}$   in $c_0(\Lambda)$ has  norm one.
To see this,
we first assume that $x_{\alpha_1}=1$ and $x_{\alpha_2}=x_{\alpha_3}=0$
with $\alpha_1=\tau(\beta_1)$, $\alpha_2=\tau(\beta_2)$, $\alpha_3=\tau(\beta_3)$
for distinct indices $\alpha_1, \alpha_2, \alpha_3\in \Lambda$ and $\beta_1, \beta_2, \beta_3\in \Lambda_1$.
Since $\bx$ and $\be_{\alpha_1}$   are parallel,   so are $T(\bx)$ and $T(\be_{\alpha_1})=
  \be_{\beta_1}$.  In particular, $\|T(\bx)\|_\infty$ is attained at the $\beta_1$-coordinate of $T(\bx)$.
  On the other hand, $\bx$ is not parallel with $\be_{\alpha_2}, \be_{\alpha_3}$, and thus $T(\bx)$ is not
parallel with $T(\be_{\alpha_2})= \be_{\beta_2}, T(\be_{\alpha_3})=\be_{\beta_3}$.
With $\bx$ playing the role of $\be_{\alpha_1}$, and $T(\bx)$ playing the role of $\be_{\beta_1}$, the above argument
shows that the $\beta_1$-coordinate of $T(\bx)$ is $\|T(\bx)\|_\infty=1$.
In general, for any norm one $\bx=(x_\lambda)_{\lambda\in \Lambda}$ in
$c_0(\Lambda)$,
we may choose distinct indices $\alpha_1=\tau(\beta_1)$, $\alpha_2=\tau(\beta_2)$, $\alpha_3=\tau(\beta_3)$,
and assume its $\alpha_1$-coordinate equals $1$.  Then
\begin{align*}
\Big| \|T(\bx)\|_\infty - \|T(\bx - x_{\alpha_2}\be_{\alpha_2} - x_{\alpha_3}\be_{\alpha_3})\|_\infty\Big| \leq
                  \|T(x_{\alpha_2}\be_{\alpha_2}+ x_{\alpha_3}\be_{\alpha_3})\|_\infty
\end{align*}
implies
\begin{align*}
\Bigg| \|T(\bx)\|_\infty -1\Bigg| &\leq  \left\|x_{\alpha_2} \left(\sum_{\tau(\beta_2)=\alpha_2} \be_{\beta_2} + \bt_{\alpha_2}\right)
               + x_{\alpha_3}\left(\sum_{\tau(\beta_3)=\alpha_3} \be_{\beta_3} + \bt_{\alpha_3} \right)\right\|_\infty \\[5pt]
           &\leq   |x_{\alpha_2}|+ |x_{\alpha_3}|.
\end{align*}
Since $\bx$ is essentially null, we can choose
as small as possible $|x_{\alpha_2}|, |x_{\alpha_3}|$.  It follows that $\|T(\bx)\|_\infty= 1$, as claimed.
In particular, $T$ is an isometry.

Going back to the original bounded linear map $T$, the formula \eqref{eq:Tbe} becomes
$$
T(\be_{\alpha})=\gamma \sum_{\tau(\beta)=\alpha} \mu_\beta\be_{\beta} + \bt_\alpha,
$$
where $\bt_\alpha\in c_0(\Lambda\setminus \Lambda_1)$  with   $\|\bt_\alpha\|_\infty < \gamma$
for every $\alpha\in \Lambda$.
For any $\bx=(x_\alpha)_{\alpha\in \Lambda}= \sum_{\alpha\in \Lambda} x_\alpha\be_\alpha$ in
$c_0(\Lambda)$, the boundedness of $T$ ensures that
$$
T(\bx)= \sum_{\alpha\in \Lambda} x_\alpha T(\be_\alpha)
= \sum_{\alpha\in \Lambda}  x_\alpha\left(\gamma \sum_{\tau(\beta)=\alpha} \mu_\beta\be_{\beta} + \bt_\alpha\right)
= \sum_{\alpha\in \Lambda}\left( \big(\sum_{\tau(\beta)=\alpha} \gamma\mu_\beta x_{\tau(\beta)}\be_{\beta}\big) +x_\alpha\bt_\alpha\right).
$$
It follows \eqref{eq:Lambda_1}.

  To verify \eqref{eq:Lambda_2}, we may assume that  there is $\beta'\in \Lambda\setminus \Lambda_1$
 and a norm one $\bx=(x_\lambda)_{\lambda\in \Lambda}$ in $c_0(\Lambda)$ such that $\by=T(\bx)$ has $\beta'$--coordinate
 with $|y_{\beta'}|>\gamma$.  Since the $\beta$--coordinates of $\by$ are bounded by $\gamma$ for all $\beta\in \Lambda_1$
  due to \eqref{eq:Lambda_1}, we see that
 $\by=T(\bx)$ and $T(\be_\alpha)$  have disjoint peak sets, and thus they
  are not parallel to each other for any $\alpha$ in $\Lambda=\tau(\Lambda_1)$.  However, if $\bx$ attains its norm at the
 ${\alpha_1}$--coordinate, then $\bx$ is parallel with $\be_{\alpha_1}$, and thus $T(\bx)$ is parallel with $T(\be_{\alpha_1})$, a contradiction.
 We thus establish \eqref{eq:Lambda_2}.
 It is now clear that $T/\gamma$ is an into isometry.

 Finally, we note that a (scalar multiple of an) into isometry of $c_0(\Lambda)$ assumes the stated forms \eqref{eq:Lambda_1} and \eqref{eq:Lambda_2},
  due to Holsztynski's Theorem (see, e.g., \cite{JW96}).
\end{proof}

The case for linear parallel/TEA pair preservers of
 $\ell_\infty(\Lambda)$ seems to be more complicated, as an element $\bx=(x_\lambda)_{\lambda\in \Lambda}$
in $\ell_\infty(\Lambda)$ might have empty peak set, that is, all its coordinates $|x_\lambda|< \|x\|_\infty$.
This makes the argument in the proof of Theorem \ref{thm:c_0} cannot be transported directly.
However, $\ell_\infty(\Lambda)\cong C(\beta\Lambda)$, and we can apply the following result for
abelian   C*-algebras.

\begin{theorem}[\cite{LTWW-poa}]\label{thm:C(X)}
Let $X,Y$ be compact Hausdorff spaces containing of at least three points.
Let $T: C(X)\to C(Y)$ be a  bijective  linear map
such that both $T, T^{-1}$ preserve parallel pairs.
Then there is a homeomorphism $\sigma: Y\to X$ and a constant unimodulus function $h\in C(Y)$ such that
$$
Tf(y) = h(y)f(\sigma(y)) \quad\text{for any $f\in C(X)$ and $y\in Y$.}
$$
\end{theorem}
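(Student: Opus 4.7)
The plan is to exploit the geometric meaning of parallelism in $C(X)$: by the $C(X)$--analogue of Lemma \ref{lem:inf-seqn-para}(c), two functions $f,g\in C(X)$ are parallel with respect to $\|\cdot\|_\infty$ if and only if they share a common \emph{peak point}, i.e., there is $x\in X$ with $|f(x)|=\|f\|_\infty$ and $|g(x)|=\|g\|_\infty$ (since $X$ is already compact Hausdorff, the ultrafilters in Lemma \ref{lem:inf-seqn-para}(c) converge to unique points). For each $x\in X$ the set $\cM_x=\{f\in C(X):|f(x)|=\|f\|_\infty>0\}$ is then a \emph{maximal} family of pairwise parallel functions: by Urysohn's lemma a sharply peaked function at $x$ cannot be parallel to one sharply peaked at any other point $x'\ne x$.

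Using that both $T$ and $T^{-1}$ preserve parallelism, I would show that $T$ permutes these maximal families: for each $y\in Y$, the family $T^{-1}(\cM_y)$ is maximal parallel in $C(X)$, hence coincides with some $\cM_{\sigma(y)}$, defining a bijection $\sigma:Y\to X$. Refining these families by phase, $\cM_{x,c}=\{f\in C(X):f(x)=c\|f\|_\infty\}$ for unimodular $c$, the linearity of $T$ combined with parallel-pair preservation forces the existence of a unimodular scalar $h(y)$ such that $T(\cM_{\sigma(y),1})\subseteq \cM_{y,h(y)}$. Testing $T$ against Urysohn functions peaking sharply at $\sigma(y)$ and passing to limits should yield the pointwise formula $Tf(y)=h(y)f(\sigma(y))$ on a dense subset of the unit ball, and hence on all of $C(X)$ by linearity and continuity. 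Continuity of $\sigma$, and hence that it is a homeomorphism, would follow from the stability of peak sets under uniform convergence together with the two-sided preservation property.

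The step I expect to be the main obstacle is proving that $h$ is \emph{constant}, not merely a continuous unimodular function as in classical Banach--Stone-type theorems. One must arrange pairs of test functions $f,g\in C(X)$ whose parallelism is incompatible with having two distinct values $h(y_1)\ne h(y_2)$: the single unimodular $\mu$ required for $\|Tf+\mu Tg\|_\infty=\|Tf\|_\infty+\|Tg\|_\infty$ cannot simultaneously align the distinct phases $h(y_1)$ and $h(y_2)$ at common peak points of $Tf$ and $Tg$ in $Y$. The assumption that $X$ and $Y$ have at least three points is essential here, matching both Corollary \ref{p-3.1} and the exceptional $\IR^2$ case flagged in Theorem \ref{p-3.2old}(a.2); in two points the matrix $C$ already exhibits non-isometric parallel preservers, and one can expect that the same $2$-point anomaly obstructs the rigidity of $h$ in the general $C(X)$ setting.
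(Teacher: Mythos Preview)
This theorem is quoted from the companion manuscript \cite{LTWW-poa} (listed as ``in preparation'') and is \emph{not} proved in the present paper; there is no proof here to compare your attempt against. The paper simply invokes the result as a black box to deduce Corollary~\ref{cor:inf-linfty}.

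That said, your reading of the conclusion is off. You take ``a constant unimodulus function $h$'' to mean that $h$ is a constant function, and you flag proving this constancy as the main obstacle. But that is a non-goal: already for $X=Y=\{1,2,3\}$ the diagonal map $T=\diag(1,i,1)$ on $(\IC^3,\|\cdot\|_\infty)$ is a bijection with both $T$ and $T^{-1}$ preserving parallel pairs (since $|Tf|=|f|$ coordinatewise, peak sets are unchanged), yet $h=(1,i,1)$ is not constant. Corollary~\ref{p-3.1} and the derived Corollary~\ref{cor:inf-linfty} both explicitly allow the unimodular weights $\mu_\lambda$ to vary with $\lambda$. The intended reading is that $|h|$ is a nonzero constant --- equivalently, $T$ is a scalar multiple of a Banach--Stone isometry --- not that $h$ itself is constant. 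The argument you sketch for constancy would therefore be aimed at a false statement.

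There is also a genuine gap earlier in your outline: the identification of $T^{-1}(\cM_y)$ with some $\cM_{\sigma(y)}$ presumes that every maximal pairwise-parallel family in $C(X)$ is of the form $\cM_x$. This already fails on three points: with $X=\{1,2,3\}$ the vectors $(1,1,0)$, $(0,1,1)$, $(1,0,1)$ are pairwise parallel but share no common peak index, so any maximal parallel family containing them is not a single $\cM_x$. One needs a sharper invariant (for instance, working only with functions having \emph{singleton} peak sets, or using linearity more directly in the spirit of Lemma~\ref{A-matrix}) to recover the map $\sigma$.
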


Below is an infinite dimensional analog of Corollary \ref{p-3.1}.

\begin{corollary} \label{cor:inf-linfty}
Let $\Lambda$ be an infinite index set.
Let  $T: \ell_\infty(\Lambda)\rightarrow \ell_\infty(\Lambda)$ be a bijective linear map.
The following conditions are
equivalent to each other.
 \begin{enumerate}[{\rm (a)}]
\item   Both $T$ and $T^{-1}$  send   TEA pairs to TEA pairs.
\item   Both $T$ and $T^{-1}$  send  parallel pairs to parallel pairs.
\item  $T$ is a scalar multiple of a surjective linear isometry.
\end{enumerate}
In this case, there is $\gamma > 0$, a family $\{\mu_\lambda: \lambda\in \Lambda\}$ of unimodular scalars,
 and a  bijective map $\tau: \Lambda\to \Lambda$ such that
for any $\bx=(x_\lambda)_{\lambda\in \Lambda} \in \ell_\infty(\Lambda)$, the image $\by=T(\bx)=(y_\lambda)_{\lambda\in \Lambda}$
has coordinates
\begin{align}\label{eq:Linfty}
y_{\lambda} = \gamma \mu_\lambda x_{\tau(\lambda)}  \quad\text{for all $\lambda\in \Lambda$}.
\end{align}
In other words, $T$ is a scalar multiple of a ``generalized permutation matrix''.
\end{corollary}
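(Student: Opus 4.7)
The plan is to transfer the problem from $\ell_\infty(\Lambda)$ to the function algebra $C(\beta\Lambda)$ via the canonical isometric isomorphism $\ell_\infty(\Lambda)\cong C(\beta\Lambda)$, and then invoke Theorem \ref{thm:C(X)}. The implications $(c)\Rightarrow(a)$ and $(a)\Rightarrow(b)$ are routine. For the first, a surjective isometry preserves norms of both individual vectors and their sums, so it preserves TEA pairs in both directions, and scaling by $\gamma>0$ does not disturb any of the equalities in the definitions of TEA or parallel pairs. For $(a)\Rightarrow(b)$, note that $(\bx,\by)$ is parallel exactly when $(\bx,\mu\by)$ is TEA for some unimodular $\mu$; combining this characterization with the linearity of $T$ (to absorb $\mu$ into $\by$) turns a bidirectional TEA preservation hypothesis into a bidirectional parallel preservation statement. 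Closing the loop then requires only $(b)\Rightarrow(c)$.

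For the main direction, the bijective linear map $T$ on $\ell_\infty(\Lambda)$ corresponds to a bijective linear map $\tilde T$ on $C(\beta\Lambda)$ such that both $\tilde T$ and $\tilde T^{-1}$ preserve parallel pairs with respect to the sup norm. Since $\Lambda$ is infinite, $\beta\Lambda$ has infinitely many (in particular at least three) points, so Theorem \ref{thm:C(X)} applies and produces a homeomorphism $\sigma:\beta\Lambda\to\beta\Lambda$ together with a function $h\in C(\beta\Lambda)$ of constant modulus $\gamma>0$ such that $\tilde Tf(y)=h(y)f(\sigma(y))$ for every $f\in C(\beta\Lambda)$ and $y\in\beta\Lambda$. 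Equivalently, $T$ is $\gamma$ times a surjective linear isometry of $\ell_\infty(\Lambda)$, which proves $(c)$.

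To obtain the explicit coordinate form \eqref{eq:Linfty}, I would use the topological fact that the isolated points of $\beta\Lambda$ are exactly the points of $\Lambda$: each $\lambda\in\Lambda$ corresponds to a principal ultrafilter and $\{\lambda\}$ is clopen in $\beta\Lambda$, whereas every point of $\beta\Lambda\setminus\Lambda$ is a free ultrafilter and hence a cluster point. Since a homeomorphism of $\beta\Lambda$ must permute isolated points, $\sigma$ restricts to a bijection $\tau:\Lambda\to\Lambda$. Setting $\mu_\lambda:=h(\lambda)/\gamma$, which is unimodular because $|h(\lambda)|=\gamma$, and evaluating the weighted composition formula at each $\lambda\in\Lambda$ yields $y_\lambda=h(\lambda)x_{\sigma(\lambda)}=\gamma\mu_\lambda x_{\tau(\lambda)}$, which is precisely \eqref{eq:Linfty}.

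The only step I expect to require any care is the identification of $\Lambda$ with the set of isolated points of $\beta\Lambda$ and the observation that this set is invariant under an arbitrary self-homeomorphism of $\beta\Lambda$; once that topological bridge is in place, Theorem \ref{thm:C(X)} delivers everything else, and the passage from the weighted composition formula on $\beta\Lambda$ to the ``generalized permutation matrix'' form on $\ell_\infty(\Lambda)$ is a bookkeeping step.
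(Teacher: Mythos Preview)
Your proposal is correct and follows essentially the same route as the paper: identify $\ell_\infty(\Lambda)$ with $C(\beta\Lambda)$, invoke Theorem~\ref{thm:C(X)} for the implication $(b)\Rightarrow(c)$, and then use the fact that $\Lambda$ is precisely the set of isolated points of $\beta\Lambda$ (hence invariant under the homeomorphism $\sigma$) to extract the bijection $\tau$ and the coordinate formula~\eqref{eq:Linfty}. Your write-up supplies a bit more detail on the easy implications and on the isolated-point argument than the paper does, but the architecture is identical.
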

\begin{proof}
While the implications (c) $\implies$ (a) $\implies$ (b) are plain, Theorem \ref{thm:C(X)} establishes (b) $\implies$ (c)
when we identify $\ell_\infty(\Lambda)$ with $C(\beta\Lambda)$.
We note that $\Lambda$ consists of all isolated points of $\beta\Lambda$.  Thus the homeomorphism $\sigma$
induces a bijective map $\tau$ from $\Lambda$ onto itself to implement \eqref{eq:Linfty}.
\end{proof}

\begin{problem}\label{problems}
  Can one replace the two direction preserver conditions  by
  that the linear map $T$ sends parallel/TEA pairs to parallel/TEA pairs in Theorem \ref{thm:c_0} and Corollary \ref{cor:inf-linfty},
  as in the finite dimensional case?
\end{problem}

\section*{Acknowledgment}

Li is an affiliate member of the Institute for Quantum Computing, University of Waterloo; his research was partially supported by the Simons Foundation Grant 851334.
M.-C. Tsai, Y.-S. Wang and N.-C. Wong are supported by Taiwan NSTC grants 112-2115-M-027-002,
113-2115-M-005-008-MY2  and 112-2115-M-110-006-MY2,
respectively.

\end{document}